\documentclass[11pt]{scrartcl}

\usepackage[english]{babel}
\usepackage[utf8]{inputenc}		

\usepackage{url}

\newcommand{\union}{\ensuremath{\cup}}		
\newcommand{\isdef}{\ensuremath{\mathrel{\mathop:}=}}		

\usepackage{enumitem}
\usepackage{amsfonts, amsmath, amssymb}
\usepackage{amsthm}			

\usepackage{pgf, tikz}
\usepackage{tikz-cd}
\usetikzlibrary{calc}
\usetikzlibrary{arrows, arrows.meta}
\usetikzlibrary{positioning}

\usepackage{wrapfig}
\usepackage{nicefrac}					
\usepackage{color}
\newcommand{\norm}[3][black]{\ensuremath{\left\Vert #3\right\Vert_{\textcolor{#1}{#2}}}}


\newcommand{\bisn}[1]{\ensuremath{\{1,\dots , #1 \}}}	




\newcommand{\ohne}{\ensuremath{\backslash}}

\DeclareMathOperator{\Sym}{Sym}
\DeclareMathOperator{\Pot}{Pot}

\DeclareMathOperator{\dist}{dist}

\newcommand{\menge}[1]{\ensuremath{\mathbb{#1}}}
\newcommand{\N}{\menge{N}}
\newcommand{\Z}{\menge{Z}}

\newcommand{\C}{\menge{C}}

\newcommand{\e}{\text{e}}				

\renewcommand{\phi}{\varphi}
\renewcommand{\epsilon}{\varepsilon}
\renewcommand{\subset}{\subseteq}


\newtheorem{lem}{Lemma}[section]		
\newtheorem{ex}[lem]{Example}		
\newtheorem{rem}[lem]{Remark}
\newtheorem{theo}[lem]{Theorem}			
\newtheorem{cor}[lem]{Corollary}		
\newtheorem{defi}[lem]{Definition}	
  {\begin{proof}[Well--defined]}
  {\end{proof}}




\newcommand{\tikzAngleOfLine}{\tikz@AngleOfLine}
\def\tikz@AngleOfLine(#1)(#2)#3{%
  \pgfmathanglebetweenpoints{%
    \pgfpointanchor{#1}{center}}{%
    \pgfpointanchor{#2}{center}}
  \pgfmathsetmacro{#3}{\pgfmathresult}%
}

\newcommand{\triangleMarkedLine}[4][black]{
	\draw[#1] (#2) -- (#3);
	\tikzAngleOfLine(#2)(#3){\angle}
	\def\dir{#4}
	\def\dist{0.18}
	\ifnum\dir>0
		\draw[#1,-open triangle 60] ($(#2)!0.5!(#3)$) -- +(\angle-90:\dist);
	\else
		\draw[#1,-open triangle 60] ($(#2)!0.5!(#3)$) -- +(\angle+90:\dist);
	\fi
}


\usepackage{amsmath, amssymb, amsfonts} 

\usepackage{ifthen}

\usepackage{tikz}
\usetikzlibrary{calc}
\usetikzlibrary{positioning}
\usetikzlibrary{shapes}
\usetikzlibrary{patterns}

\usepackage{tikz-cd}

\makeatletter
\edef\texforht{TT\noexpand\fi
  \@ifpackageloaded{tex4ht}
    {\noexpand\iftrue}
    {\noexpand\iffalse}}
\makeatother

\makeatletter
\newif\iftikz@node@phantom
\tikzset{
  phantom/.is if=tikz@node@phantom,
  text/.code=%
    \edef\tikz@temp{#1}%
    \ifx\tikz@temp\tikz@nonetext
      \tikz@node@phantomtrue
    \else
      \tikz@node@phantomfalse
      \let\tikz@textcolor\tikz@temp
    \fi
}
\usepackage{etoolbox}
\patchcmd\tikz@fig@continue{\tikz@node@transformations}{%
  \iftikz@node@phantom
    \setbox\pgfnodeparttextbox\hbox{}
  \fi\tikz@node@transformations}{}{}
\makeatother

%
%
%

\tikzset{ 
    vertexNodePlain/.style = {fill=#1, shape=circle, inner sep=0pt, minimum size=2pt, text=none},
    vertexNodePlain/.default=gray,
    vertexPlain/labels/.style = {
        vertexNode/.style={vertexNodePlain=##1},
        vertexLabel/.style={gray}
    },
    vertexPlain/nolabels/.style = {
        vertexNode/.style={vertexNodePlain=##1},
        vertexLabel/.style={text=none}
    },
    vertexPlain/.style = vertexPlain/#1,
    vertexPlain/.default=labels
}
\tikzset{
    vertexNodeNormal/.style = {fill=#1, shape=circle, inner sep=0pt, minimum size=4pt, text=none},
    vertexNodeNormal/.default = blue,
    vertexNormal/labels/.style = {
        vertexNode/.style={vertexNodeNormal=##1},
        vertexLabel/.style={blue}
    },
    vertexNormal/nolabels/.style = {
        vertexNode/.style={vertexNodeNormal=##1},
        vertexLabel/.style={text=none}
    },
    vertexNormal/.style = vertexNormal/#1,
    vertexNormal/.default=labels
}
\tikzset{
    vertexNodeBallShading/pdf/.style = {ball color=#1},
    vertexNodeBallShading/svg/.style = {fill=#1},
    vertexNodeBallShading/.code = {
        \if\texforht
            \tikzset{vertexNodeBallShading/svg=#1!90!black}
        \else
            \tikzset{vertexNodeBallShading/pdf=#1}
        \fi
    },
    vertexNodeBall/.style = {shape=circle, vertexNodeBallShading=#1, inner sep=2pt, outer sep=0pt, minimum size=3pt, font=\tiny},
    vertexNodeBall/.default = orange,
    vertexBall/labels/.style = {
        vertexNode/.style={vertexNodeBall=##1, text=black},
        vertexLabel/.style={text=none}
    },
    vertexBall/nolabels/.style = {
        vertexNode/.style={vertexNodeBall=##1, text=none},
        vertexLabel/.style={text=none}
    },
    vertexBall/.style = vertexBall/#1,
    vertexBall/.default=labels
}
\tikzset{ 
    vertexStyle/.style={vertexNormal=#1},
    vertexStyle/.default = labels
}

\newcommand{\vertexLabelR}[4][]{
    \ifthenelse{ \equal{#1}{} }
        { \node[vertexNode] at (#2) {#4}; }
        { \node[vertexNode=#1] at (#2) {#4}; }
    \node[vertexLabel, #3] at (#2) {#4};
}
\newcommand{\vertexLabelA}[4][]{
    \ifthenelse{ \equal{#1}{} }
        { \node[vertexNode] at (#2) {#4}; }
        { \node[vertexNode=#1] at (#2) {#4}; }
    \node[vertexLabel] at (#3) {#4};
}

\newcommand{\edgeLabelColor}{blue!20!white}
\tikzset{
    edgeLineNone/.style = {draw=none},
    edgeLineNone/.default=black,
    edgeNone/labels/.style = {
        edge/.style = {edgeLineNone=##1},
        edgeLabel/.style = {fill=\edgeLabelColor,font=\small}
    },
    edgeNone/nolabels/.style = {
        edge/.style = {edgeLineNone=##1},
        edgeLabel/.style = {text=none}
    },
    edgeNone/.style = edgeNone/#1,
    edgeNone/.default = labels
}
\tikzset{
    edgeLinePlain/.style={line join=round, draw=#1},
    edgeLinePlain/.default=black,
    edgePlain/labels/.style = {
        edge/.style={edgeLinePlain=##1},
        edgeLabel/.style={fill=\edgeLabelColor,font=\small}
    },
    edgePlain/nolabels/.style = {
        edge/.style={edgeLinePlain=##1},
        edgeLabel/.style={text=none}
    },
    edgePlain/.style = edgePlain/#1,
    edgePlain/.default = labels
}
\tikzset{
    edgeLineDouble/.style = {very thin, double=#1, double distance=.8pt, line join=round},
    edgeLineDouble/.default=gray!90!white,
    edgeDouble/labels/.style = {
        edge/.style = {edgeLineDouble=##1},
        edgeLabel/.style = {fill=\edgeLabelColor,font=\small}
    },
    edgeDouble/nolabels/.style = {
        edge/.style = {edgeLineDouble=##1},
        edgeLabel/.style = {text=none}
    },
    edgeDouble/.style = edgeDouble/#1,
    edgeDouble/.default = labels
}
\tikzset{
    edgeStyle/.style = {edgePlain=#1},
    edgeStyle/.default = labels
}

%
\newcommand{\faceColorY}{yellow!60!white}   
\newcommand{\faceColorB}{blue!60!white}     
\newcommand{\faceColorC}{cyan!60}           
\newcommand{\faceColorR}{red!60!white}      
\newcommand{\faceColorG}{green!60!white}    
\newcommand{\faceColorO}{orange!50!yellow!70!white} 

\newcommand{\faceColor}{\faceColorY}
\newcommand{\faceColorSwap}{\faceColorC}


\tikzset{
    face/.style = {fill=#1},
    face/.default = \faceColor,
    faceY/.style = {face=\faceColorY},
    faceB/.style = {face=\faceColorB},
    faceC/.style = {face=\faceColorC},
    faceR/.style = {face=\faceColorR},
    faceG/.style = {face=\faceColorG},
    faceO/.style = {face=\faceColorO}
}
\tikzset{
    faceStyle/labels/.style = {
        faceLabel/.style = {}
    },
    faceStyle/nolabels/.style = {
        faceLabel/.style = {text=none}
    },
    faceStyle/.style = faceStyle/#1,
    faceStyle/.default = labels
}
\tikzset{ face/.style={fill=#1} }
\tikzset{ faceSwap/.code=
    \ifdefined\swapColors
        \tikzset{face=\faceColorSwap}
    \else
        \tikzset{face=\faceColor}
    \fi
}

\usepackage{hyperref}


\newcommand{\colE}{blue}

\renewcommand{\bisn}[1]{\ensuremath{\underline{#1}}}	

\author{Markus Baumeister\footnote{Lehr- und Forschungsgebiet Algebra, RWTH Aachen University}}
\title{Foldability of simplicial surfaces onto a triangle}

\begin{document}
\maketitle

In recent years, complicated folding structures have been employed more
often, with applications ranging from architecture 
(\cite{BuriWeinand_OrigamiFoldedPlateStructures}) to robotics 
(\cite{Shigemune_OrigamiRobotPrinting}). Concurrently, mathematical aspects
of paper folding have been developed, 
spanning a diverse range of
topics (contrast \cite{Lang_OrigamiGeometricConstructions}, 
\cite{Lang_Origami4}, \cite{Hull94onthe}, 
\cite{Izmestiev_ClassificationKokotsakisPolyhedra},
\cite{SauerGraf_FlaechenverbiegungFacettenflaeche},
\cite{Stachel_KinematicKokotsakis},
\cite{Tachi_GeneralizationMeshOrigami}, \cite{WuYou_OrigamiQuaternions},
and \cite{DemRou}).
One interesting question is how a given surface built up from triangles
can be folded. There has been some progress towards answering this question, 
mostly with
explicit angles (\cite{BernHayes_ComplexityOfFlatOrigami}, \cite{Arkin2001}).
Since this problem is very hard in principle, we try to simplify it by
considering a purely combinatorial model of folding, independent of
any angles. 

In this paper, we analyse which surfaces can be folded
onto a triangle in this combinatorial model.
Considering only combinatorial folding alters the situation
drastically. For example, the octahedron can combinatorially be folded
onto a triangle, although it is rigid under regular folding 
(\cite{Dehn_StarrheitKonvexerPolyeder}). In contrast, a tetrahedron
cannot combinatorially be folded onto a triangle. In particular, the
notion of combinatorial folding focuses on intrinsic properties of
the surface instead of the embedding.

In order to describe combinatorial 
folding properly, we have to choose an appropriate
model. Although our model could be described as a triangulation of
a two--dimensional manifold (\cite{GrossYellenGraphs}, 
\cite{DeLoeraRambauSantosTriangulations}) or as a combinatorial manifold
(\cite{IzmestievKleeNovik_SimplicialMovesBalancedComplexes}),
we prefer a more intuitive,
combinatorial description,
which focuses on the incidence relations between vertices, edges, and faces
directly. We eschew the additional manifold structure since our analysis 
does not depend on it -- only the incidence relations 
between vertices, edges,
and faces are strictly necessary.

We will not describe the complete combinatorial folding theory in this paper,
and will restrict our definition in Section \ref{Sec_FoldingRestrictions}
to the specific case of ``folding onto a triangle''. Our main result is
that any combinatorial simplicial surface that can be folded
onto a triangle is orientable and admits a vertex--3--colouring. With
these properties given, we can reformulate the triangle--folding--problem
as the search for a cyclic permutation whose products with certain
involutions have a specified number of cycles (compare Corollary
\ref{Cor_FoldingOntoTriangle}
for the precise formulation).

\section{Covering a Triangle}\label{Sec_TriangleCovering}
This paper is concerned with the question 
``When can a simplicial surface be folded onto a single triangle?''.

Our definition of simplicial surfaces differs from that in the literature.
It is important to note that a simplicial surface (in our sense) is
not a simplicial complex, since different simplices may have the
same vertices. The closest definition can be found in \cite{Polthier2002}.

\begin{defi}\label{Def_SimplicialSurface}
    A \textbf{simplicial surface} is a quadruple $(V,E,F,\prec)$ such that
    \begin{enumerate}
        \item $V,E,F$ are finite sets (called \textbf{vertices}, \textbf{edges}, 
            and \textbf{faces}) and 
            $\prec\; \subset (V \times E) \uplus (V \times F) \uplus (E \times F)$
            is a transitive relation, called \textbf{incidence}.
        \item For every edge $e \in E$, there are exactly two vertices
            incident to $e$.
        \item For every face $f \in F$ there are three incident vertices
            $v_1,v_2,v_3$ and three incident edges $e_1,e_2,e_3$
            such that $v_i$ and $v_{i+1}$ are incident to
            $e_i$ for $1 \leq i \leq 3$ (where $v_4 \isdef v_1$).
        \item For every edge $e \in E$, there are at most two faces incident
            to $e$.
        \item For every vertex $v \in V$ there is a finite sequence
            $(e_1,f_1,e_2,f_2,\dots)$ such that
            \begin{itemize}
                \item The $e_i$ are pairwise distinct and exactly those edges 
                    incident to $v$.
                \item The $f_i$ are pairwise distinct and
                    exactly those faces incident to $v$.
                \item $e_i$ and $e_{i+1}$ are incident to $f_i$.
                \item If the final element of the sequence is a face, then $e_1$
                    is incident to that face.
            \end{itemize}
        \item For every vertex $v \in V$, there is an edge $e \in E$ such
            that $v \prec e$.
        \item For every edge $e \in E$, there is a face $f \in F$ with 
            $e \prec f$.
    \end{enumerate}
    The simplicial surface is \textbf{closed} if there are exactly two faces
    incident to every edge.
\end{defi}

While we allow our simplicial surfaces to be disconnected, it is often 
convenient to restrict to connected simplicial surfaces.

\begin{ex}\label{Ex_Triangle}
    For any set $M$ with three elements we can define a 
    \textbf{triangle} as the simplicial surface
    $(\Pot_1(M),\Pot_2(M),\Pot_3(M),\subsetneq)$.
\end{ex}

\begin{ex}\label{Ex_TorusIncidence}
    We can depict the incidence structure of a simplicial surface graphically.
    The following surface is a torus.
    \begin{center}
        \begin{tikzpicture}[vertexStyle, edgeStyle, faceStyle]
            \def\len{2.5}
            \foreach \i in {0,1,2}{
                \foreach \j in {0,1,2}{
                    \coordinate (A\i\j) at (\len*\i,\len*\j);
                }
            }

            \draw[edge, face]
                (A02) -- node[edgeLabel]{c} (A01) -- (A11) -- cycle
                (A12) -- node[edgeLabel]{a} (A02) -- node[edgeLabel]{e} (A11) -- cycle
                (A22) -- node[edgeLabel]{b} (A12) -- node[edgeLabel]{f} (A11) -- cycle
                (A21) -- node[edgeLabel]{c} (A22) -- node[edgeLabel]{g} (A11) -- cycle
                (A20) -- node[edgeLabel]{d} (A21) -- node[edgeLabel]{h} (A11) -- cycle
                (A10) -- node[edgeLabel]{b} (A20) -- node[edgeLabel]{i} (A11) -- cycle
                (A00) -- node[edgeLabel]{a} (A10) -- node[edgeLabel]{j} (A11) -- cycle
                (A01) -- node[edgeLabel]{d} (A00) -- node[edgeLabel]{k} (A11) -- node[edgeLabel]{l} cycle;

            \foreach \p/\q/\n in {01/02/1, 02/12/2, 12/22/3, 22/21/4, 21/20/5, 20/10/6, 10/00/7, 00/01/8}{
                \node at (barycentric cs:A\p=1,A\q=1,A11=1) {\n};
            }

            \foreach \p/\d/\n in {00/below left/A, 10/below/B, 20/below right/A, 01/left/C, 11/right/D, 21/right/C, 02/above left/A, 12/above/B, 22/above right/A}{
                \vertexLabelR{A\p}{\d}{\n}
            }

        \end{tikzpicture}
    \end{center}
    Here, we have $V = \{A,B,C,D\}$, $E = \{a,b,\dots, l\}$ and 
    $F = \{1,\dots,8\}$.
\end{ex}

The process of ``folding onto a triangle'' 
can be separated into two conditions that need to be fulfilled.
The first one is a surjective map from the surface
to the triangle, and the second one consists of restrictions imposed by 
folding. We start with the
characterisation of the surfaces that can be mapped to a triangle.

\begin{defi}\label{Def_SimplicialMap}
    Let $S_1 = (V_1,E_1,F_1,\prec_1)$ and $S_2 = (V_2,E_2,F_2,\prec_2)$ be two
    simplicial surfaces. A \textbf{simplicial map} $\phi$ is a map
    \begin{equation*}
        \phi: V_1 \uplus E_1 \uplus F_1 \to V_2 \uplus E_2 \uplus F_2,
    \end{equation*}
    such that $\phi(V_1) \subset V_2$, $\phi(E_1) \subset E_2$, 
    $\phi(F_1) \subset F_2$ and $x \prec y$ implies $\phi(x) \prec \phi(y)$.

    If there is a simplicial map that is an inverse of $\phi$, then $\phi$ is called
    \textbf{simplicial isomorphism}.
\end{defi}

It is important to note that the vertices of a triangle can be coloured
with three distinct colours. The same colouring gives necessary and sufficient 
conditions for the existence of a map from the surface to the triangle.

\begin{defi}\label{Def_VertexColouring}
    Let $S = (V,E,F,\prec)$ be a simplicial surface. A 
    map $c_V: V \to \{1,2,3\}$ is called
    \textbf{vertex--3--colouring} if,
    for every edge $e \in E$ and distinct vertices $v_1,v_2 \in V$
    with $v_1 \prec e$ and $v_2 \prec e$, we have $c_V(v_1) \neq c_V(v_2)$.
\end{defi}

\begin{rem}\label{Rem_TriangleCoveringClassification}
    A simplicial surface admits a vertex--3--colouring if and only if
    there exists a simplicial map from the surface to the triangle.
\end{rem}

\section{Folding restrictions}\label{Sec_FoldingRestrictions}
After classifying all maps from a simplicial surface onto a 
triangle,
we incorporate the folding restrictions. To do so, we utilise
the edge--colouring which is induced by the 
vertex--colouring\footnote{This colouring can be interpreted
as proper colouring of the face--edge--graph of the simplicial surface.}.
\begin{defi}\label{Def_EdgeColouring}
    Let $S = (V,E,F,\prec)$ be a simplicial surface. A map 
    $c_E: E \to \{1,2,3\}$ is called
    \textbf{edge--3--colouring} if,
    for every face $f \in F$ and distinct edges $e_1,e_2 \in E$ with
    $e_1 \prec f$ and $e_2 \prec f$, we have $c_E(e_1) \neq c_E(e_2)$.

    Every vertex--3--colouring $c_V$ of $S$ defines an edge--3--colouring
    of $S$ via 
    $c_E(e) = c_V(v_1) + c_V(v_2) - 2$, where $v_1,v_2$ are 
    the vertices incident to $e$. This is called the 
    \textbf{induced edge--3--colouring}.
\end{defi}

We note in passing that our induced edge--3--colourings coincide with
the mmm--structures considered in \cite{PSNBsimplicialMMM}.

To formulate the folding restrictions, we need to be more specific about
the concept ``folding onto a triangle''. Here, we restrict our attention
to closed simplicial surfaces.

Intuitively, we want to formalise this picture:
\begin{center}
    \begin{tikzpicture}[vertexStyle, edgeStyle, faceStyle]
        \coordinate (D1) at (0,0);
        \coordinate (D2) at (-1,1);
        \coordinate (D3) at (-1,0);
        \def\e{0.025}
        \foreach \i in {8,7,6,5,4,3,2,1}{
            \draw[edge, face] ($(D1)+(0,-\i*\e)$) -- ($(D2)+(0,-\i*\e)$) -- ($(D3)+(0,-\i*\e)$) -- cycle;
        }
    \end{tikzpicture}
\end{center}
Clearly, the faces are ordered by the folding. We model this
as a linear order on the faces of the simplicial surface.

Additionally, real materials usually do not self--intersect. As those
intersections can only become relevant at the edges, we focus our attention
there. Clearly, if edges of the simplicial surface are mapped to different
edges of the triangle, they do not come into conflict with each other. 
Therefore, we only
have to consider whether two edges that lie
in the same colour--class of the induced edge--3--colouring, are 
intersection--free.

Since we only consider closed simplicial surfaces,
each of these edges is adjacent to exactly two faces.
Since they come from an edge--3--colouring, these are four
different faces.
If these four faces are ordered
by the folding, one of the following cases has to manifest (we depict
faces as vertical lines and edges as half--circles):
\begin{center}
    \newcommand{\faceLines}{
        \coordinate (A1) at (0,0);
        \coordinate (B1) at (0,-\len);
        \coordinate (A2) at (2*\inDist,0);
        \coordinate (B2) at (2*\inDist,-\len);
        \coordinate (A3) at (4*\inDist,0);
        \coordinate (B3) at (4*\inDist,-\len);
        \coordinate (A4) at (6*\inDist,0);
        \coordinate (B4) at (6*\inDist,-\len);

        \foreach \i in {1,2,3,4}{
            \draw (A\i) -- (B\i);
        }
    }
    \begin{tikzpicture}
        \def\inDist{0.3}
        \def\outDist{2}
        \def\len{0.4}

        \begin{scope}
            \faceLines
            \draw (A2) arc(0:180:\inDist);
            \draw (A4) arc(0:180:\inDist);
        \end{scope}
        \begin{scope}[shift={(6*\inDist+\outDist,0)}]
            \faceLines
            \draw (A3) arc(0:180:2*\inDist);
            \draw (A4) arc(0:180:2*\inDist);
        \end{scope}
        \begin{scope}[shift={(12*\inDist+2*\outDist,0)}]
            \faceLines
            \draw (A3) arc(0:180:\inDist);
            \draw (A4) arc(0:180:3*\inDist);
        \end{scope}
    \end{tikzpicture}
\end{center}
The edges only intersect in the middle case.
Therefore, we can define folding as follows:
\begin{defi}\label{Def_TriangleFolding}
    Let $(V,E,F,\prec)$ be a closed simplicial surface with 
    vertex--3--colouring $c_V$ and induced edge--3--colouring $c_E$. 
    A \textbf{triangle--folding} is a
    total ordering $<$ on $F$ such that:

    If there are two edges with faces $\{f_1,f_2\}$ and $\{g_1,g_2\}$,
    such that $f_1 < g_1 < f_2 < g_2$ holds, then these edges have
    different edge colours.
\end{defi}

\section{Relation between linear and cyclic orders}\label{Sec_Orders}
Folding onto a triangle requires a linear
order to be intersection--free. 
Since it is more convenient to work with a cyclic order,
we give a proof of their equivalence.

\begin{defi}\label{Def_CyclicOrder}
    Let $M$ be a finite set. A \textbf{cyclic order on $M$} is a
    cycle $\sigma \in \Sym(M)$ of length $|M|$.
\end{defi}

From a linear order we can construct a cyclic order by making the
smallest element follow the largest one. Conversely, we can cut a
cyclic order at any point to obtain a linear order.
\begin{defi}\label{Def_InducedOrders}
    Let $M$ be a finite set.
    \begin{enumerate}
        \item Let $m_1 < m_2 < \dots < m_{|M|}$ be a total order on $M$.
            The \textbf{induced cyclic order} is the cyclic order
            \begin{equation*}
                \sigma_<: M \to M,\quad x \mapsto 
                    \begin{cases}
                        m_{i+1} & x = m_i \text{ for } i < |M|, \\
                        m_1 & x = m_{|M|}.
                    \end{cases}
            \end{equation*}
        \item Let $\sigma$ be a cyclic order on $M$. Given $m \in M$, the 
            \textbf{induced linear order} $<_\sigma$ is defined as
            \begin{equation*}
                m <_\sigma \sigma(m) <_\sigma \sigma^2(m) <_\sigma \dots <_\sigma \sigma^{|M|-1}(m).
            \end{equation*}
    \end{enumerate}
\end{defi}
A partition can be intersection--free for both linear and cyclic orders.
\begin{defi}\label{Def_IntersectionFreedom}
    Let $M$ be a finite set and $\mathcal{P}$ a partition of 
    two--element--subsets of $M$. 
    \begin{enumerate}
        \item Let $<$ be a linear order on $M$. We call $\mathcal{P}$
            \textbf{intersection--free with respect to $<$} if 
            $\{m_1,m_2\}, \{n_1,n_2\} \in \mathcal{P}$ implies that
            $m_1 < n_1 < m_2 < n_2$ is impossible.
        \item Let $\sigma$ be a cyclic order on $M$. We call
            $\mathcal{P}$ \textbf{intersection--free with respect to $\sigma$}
            if there are no $m \in M$ and $1 \leq i < j < k < |M|$
            such that $\{m, \sigma^j(m)\} \in \mathcal{P}$ and
            $\{\sigma^i(m), \sigma^k(m)\} \in \mathcal{P}$.
    \end{enumerate}
\end{defi}
The conversion between linear and cyclic
orders does not change the intersections.
\begin{rem}\label{Rem_ConversionBetweenOrders}
    Let $M$ be a finite set and $\mathcal{P}$ a partition of
    two--element--subsets of $M$.
    \begin{enumerate}
        \item If $<$ is a linear order on $M$ and $\mathcal{P}$ is 
            intersection--free with respect to $<$, then 
            $\mathcal{P}$ is also intersection--free with respect
            to the induced cyclic order $\sigma_<$.
        \item If $\sigma$ is a cyclic order on $M$ and 
            $\mathcal{P}$ is intersection--free with respect to 
            $\sigma$, then $\mathcal{P}$ is also 
            intersection--free with respect to any induced linear
            order $<_{\sigma}$.
    \end{enumerate}
\end{rem}
\begin{proof}
    \begin{enumerate}
        \item Assume there are $1 \leq i < j < k$ and $m \in M$ such that
            $\{m,\sigma_<^j(m)\} \in \mathcal{P}$ and 
            $\{\sigma_<^i(m),\sigma_<^k(m)\} \in \mathcal{P}$.

            Let $n$ be the $<$--minimum of 
            $\{m, \sigma_<^i(m), \sigma_<^j(m), \sigma_<^k(m)\}$. Then
            $n = \sigma_<^s(m)$ with $s \in \{0,i,j,k\}$. We consider the
            case $s = i$ in detail, the other ones are analogous.

            We have 
            $\{m, \sigma_<^j(m)\} = \{ \sigma_<^{|M|-i}(n), \sigma_<^{j-i}(n) \}$ 
            and
            $\{\sigma_<^i(m),\sigma_<^k(m)\} = \{n, \sigma_<^{k-i}(n)\}$.
            Since 
            $n < \sigma_<^{j-i}(n) < \sigma_<^{k-i}(n) < \sigma_<^{|M|-i}(n)$,
            this contradicts $\mathcal{P}$ beeing intersection--free with
            respect to $<$.
        \item This is obvious. \qedhere
    \end{enumerate}
\end{proof}

\section{Circle representations}\label{Sec_CircleRepresentation}
We went to the effort of converting linear and cyclic orders into each
other to describe them as permutations.
For $n \in \N$ we use
$\bisn{n}$ to denote the set $\{1,2,\dots,n\}$.

Given a closed simplicial surface with $2n$ faces\footnote{In a closed 
simplicial surface, the number of faces is always even: By counting the 
number of edge--face--pairs in two different ways, we obtain that $2E = 3F$,
where $E$ is the number of edges and $F$ is the number of faces.} and 
an edge--3--colouring, each colour class defines a partition
of the faces into two--element--subsets (each edge is mapped to
the set of its two adjacent faces). This partition can also be
represented by a fix--point--free involution in $\Sym(\bisn{2n})$.

\begin{ex}\label{Ex_TorusColouring}
    Up to renaming the colours, the torus from Example \ref{Ex_TorusIncidence} 
    has exactly
    one vertex--3--colouring with colour classes $\{D\}$, $\{B,C\}$, 
    and $\{A\}$.

    This induces the edge--3--colouring with colour classes $\{a,b,c,d\}$,
    $\{e,g,i,k\}$, and $\{f,h,j,l\}$. Each colour class defines a
    partition of the faces, which can be interpreted as a fix--point--free
    involution.

    \begin{tabular}{ccc}
        \underline{colour class} & \underline{colour partition} & \underline{colour involution} \\
        $\qquad\{a,b,c,d\}\qquad$ & $\{ \{2,7\}, \{3,6\}, \{1,4\}, \{5,8\} \}$ & $\qquad(1,4)(2,7)(3,6)(5,8)\qquad$ \\
        $\{e,g,i,k\}$ & $\{ \{1,2\}, \{3,4\}, \{5,6\}, \{7,8\} \}$ & $(1,2)(3,4)(5,6)(7,8)$ \\
        $\{f,h,j,l\}$ & $\{ \{2,3\}, \{4,5\}, \{6,7\}, \{1,8\} \}$ & $(1,8)(2,3)(4,5)(6,7)$ 
    \end{tabular}
\end{ex}

\begin{defi}\label{Def_ColourPartitionInvolution}
    Let $(V,E,F,\prec)$ be a closed simplicial surface with edge--3--colouring
    $c_E$. For each colour $C$ the \textbf{colour partition} is
    \begin{equation}
        \left\{ \{ f \in F \mid e \prec f \} \mid e\in E, c_E(e) = C \right\}
    \end{equation}
    and the \textbf{colour involution} is a map $\rho: F \to F$ that 
    assigns to
    each face $f$ the unique face $g$ sharing an edge of colour $C$ with it.
\end{defi}

With Definition \ref{Def_IntersectionFreedom} and Definition 
\ref{Def_ColourPartitionInvolution} we can reformulate the definition
of triangle--folding.
\begin{rem}\label{Rem_TriangleFoldingByIntersectionFreedom}
    Let $(V,E,F,\prec)$ be a closed simplicial surface with 
    vertex--3--colouring $c_V$ and induced edge--3--colouring $c_E$. 
    A total order $<$ on $F$ is a triangle--folding if and only
    if all colour partitions of $c_E$ are intersection--free with
    respect to $<$.
\end{rem}

We want to restate the folding restrictions in terms of
a cyclic permutation on the faces and the fix--point--free involutions
defined by the edge--3--colouring. Let $\sigma \in \Sym(\bisn{2n})$ denote
a cyclic permutation of the $2n$ faces and suppose that 
$\rho \in \Sym(\bisn{2n})$ is an involution representing an edge
colour class.

We will show that $\rho$ is intersection--free
with respect to $\sigma$ if and only if
their product $\sigma\rho$ has exactly $n+1$ cycles. To prove this
claim, we translate the permutations into a geometric setting. We arrange
the elements of $\bisn{2n}$ into a circle, with the order defined by
$\sigma$. The involution is depicted by connecting two points in the
same orbit.

Consider the involutions $(1,4)(2,7)(3,6)(5,8)$ and
$(1,2)(3,4)(5,6)(7,8)$ from the previous example, together with
the cyclic order $(1,4,3,8,5,2,7,6)$. The first one is
in\-ter\-sec\-tion--free, the second one is not.
\newcommand{\circlePoints}{
    \def\off{9}
    \foreach \i in {1,...,8}{
        \coordinate (\i) at (-22.5+45*\i:1);
    }
}
\newcommand{\circleCircumference}{
    \def\off{9}
    \foreach \i in {1,...,8}{
        \draw[dashed,->] (-22.5+45*\i+\off:1) arc (-22.5+45*\i+\off:22.5+45*\i-\off:1);  
    }

}
\newcommand{\labelCirclePoints}{
    \foreach \i/\j in {1/6, 2/1, 3/4, 4/3, 5/8, 6/5, 7/2, 8/8}{
        \node[circle,fill=white] at (\i) {\j};
    }
}
\begin{center}
    \begin{tikzpicture}[scale=2]
        \begin{scope}
            \circlePoints
            \circleCircumference
            \foreach \i/\j in {2/3, 5/6, 1/4, 7/8}{
                \draw[\colE,thick] (\i) -- (\j);
            }
            \labelCirclePoints
        \end{scope}

        \begin{scope}[shift={(4,0)}]
            \circlePoints
            \circleCircumference
            \foreach \i/\j in {3/4, 7/2, 1/6, 5/8}{
                \draw[\colE,thick] (\i) -- (\j);
            }
            \labelCirclePoints
        \end{scope}
    \end{tikzpicture}
\end{center}

\begin{defi}\label{Def_CircleRepresentation}
    Let $n \in \N$ and $\sigma$ be a cyclic order on $\bisn{2n}$. Let
    $\{M_j\}_{1 \leq j \leq n}$ be a partition of $\bisn{2n}$ into
    two--element--subsets.
    A \textbf{circle representation} $\mathcal{C}$ 
    of $(\sigma, \{M_j\}_{1 \leq j \leq n})$ 
    is a triple 
    $(\iota, \{Z_k\}_{1\leq k\leq 2n}, \{S_j\}_{1\leq j \leq n})$ with
    \begin{enumerate}
        \item A map $\iota: \bisn{2n} \to \{ x \in \C \mid \norm{}{x} = 1 \}$
            with $\iota(\sigma(m)) = \e^{\frac{2\pi i}{2n}}\iota(m)$  for
            all $m \in \bisn{2n}$.
        \item The \textbf{arcs} 
            $Z_k \isdef \{ \iota(k) \cdot \e^{\frac{2\pi i}{2n}x} \mid x \in [0,1] \}$
            with \textbf{origin} $\iota(k)$ and \textbf{target} 
            $\iota(\sigma(k))$.
        \item The \textbf{line segments} 
            $S_j \isdef \{ \alpha\iota(x_1) + (1-\alpha)\iota(x_2) \mid M_j = \{x_1,x_2\}, \alpha \in [0,1] \}$.
    \end{enumerate}
    The circle representation is \textbf{intersection--free}
    if the line segments are pairwise disjoint.

    If $\{M_j\}_{1\leq j\leq n}$ is the set of orbits of a 
    fix--point--free involution $\rho$, we call $\mathcal{C}$
    a \textbf{circle representation of $(\sigma,\rho)$}.
\end{defi}

This notion of intersection--free is connected with the concepts
from Definition \ref{Def_IntersectionFreedom}.
\begin{lem}\label{Lem_IntersectionFreedomOfCircleAndCycle}
    Let $\sigma$ be a cyclic order on $\bisn{2n}$ and 
    $\{M_j\}_{1 \leq j \leq n}$ a partition of $\bisn{2n}$ into 
    two--element--subsets. Then the following statements are equivalent:
    \begin{enumerate}
        \item $\{M_j\}_{1 \leq j\leq n}$ is intersection--free with respect to $\sigma$.
        \item Any circle representation of $(\sigma, \{M_j\}_{1 \leq j \leq n})$
            is intersection--free.
    \end{enumerate}
\end{lem}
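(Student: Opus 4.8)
The plan is to reduce the claim to the classical fact that two chords of a circle meet exactly when their endpoints separate each other, and then to match that separation with the index pattern forbidden in Definition~\ref{Def_IntersectionFreedom}(2). First I would record that a circle representation is unique up to rotation: the rule $\iota(\sigma(m)) = \zeta\iota(m)$ with $\zeta = e^{2\pi i/(2n)}$ forces $\iota(\sigma^s(m_0)) = \zeta^s\iota(m_0)$, so $\iota$ is determined by the single value $\iota(m_0)$, and any two admissible maps differ by a unit-modulus factor, i.e. a rotation. Since rotations preserve which segments meet, the property in (2) is independent of the chosen representation; in particular at least one representation exists and ``any'' may be read as ``some''. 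I would then fix one representation and identify each element with its \emph{position} $s \in \{0,\dots,2n-1\}$, where $\iota(\sigma^s(m_0))$ sits at angle $\theta_0 + \pi s/n$.

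Next I would reduce both sides to statements about unordered pairs of blocks. As the blocks $M_j$ are disjoint, two distinct segments have four distinct endpoints, and since a line meets the circle in at most two points, no three of the points are collinear; hence no endpoint lies on a foreign segment, and two segments are either disjoint or cross transversally. So (2) fails iff some two segments cross, while the negation of (1) is, by Definition~\ref{Def_IntersectionFreedom}(2), exactly the existence of two blocks in the forbidden index pattern. It therefore suffices to prove, for two fixed blocks, that their chords cross iff the four endpoints alternate around the circle iff the pattern $\{m,\sigma^j(m)\},\{\sigma^i(m),\sigma^k(m)\}$ with $0<i<j<k<2n$ occurs.

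The geometric core is the separation lemma, which I expect to be the one point to argue carefully. Let $L$ be the line through the endpoints $P_1,P_2$ of the first block. The circle meets $L$ only in $P_1,P_2$, so the two open arcs they cut out lie in the two open half-planes bounded by $L$, and $L\cap\overline{D}$ is exactly the chord $[P_1P_2]$. If both endpoints $Q_1,Q_2$ of the second block lie on one arc, they lie in one open half-plane, so the segment $Q_1Q_2$ avoids $L$ and hence the first chord; if they lie on different arcs, $Q_1Q_2$ meets $L$ in one point, which---being in the closed disk---lies on $[P_1P_2]$, so the chords cross. Thus the chords meet iff exactly one endpoint of the second block has angle strictly between those of $P_1,P_2$, i.e. iff the four endpoints alternate.

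Finally I would translate ``alternating'' into the index pattern. Starting the count at one endpoint $m$ of the first block, the other three points occur at forward offsets $i<j<k$ in $\{1,\dots,2n-1\}$, and the four points alternate between the two blocks precisely when the point at offset $j$ shares $m$'s block while offsets $i,k$ form the other block---that is, when $\{m,\sigma^j(m)\}$ and $\{\sigma^i(m),\sigma^k(m)\}$ are the two blocks, which is verbatim the configuration of Definition~\ref{Def_IntersectionFreedom}(2). Chaining the reductions yields: some two chords cross $\iff$ some two blocks alternate $\iff$ the forbidden pattern occurs, i.e. $\neg(2)\Leftrightarrow\neg(1)$, which is the asserted equivalence. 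The only mild subtlety to keep honest is that $m$ may be taken as either endpoint of its block (both choices give an admissible, increasing offset triple), and that the largest offset is at most $2n-1<2n$, so the index bound $k<|M|$ is met.
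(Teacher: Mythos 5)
Your proposal is correct and follows essentially the same route as the paper: both arguments reduce to the classical fact that two chords of a circle intersect exactly when their endpoint pairs separate each other on the circle (you argue via the two half-planes cut out by the chord's line, the paper via the two convex components of the disk minus the chord), and both then match this alternation with the forbidden index pattern of Definition~\ref{Def_IntersectionFreedom}. Your added remarks on uniqueness of the representation up to rotation and on the choice of base point $m$ are small refinements of details the paper treats implicitly.
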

\begin{proof}
    Consider two line segments $\overline{\iota(a_1)\iota(a_2)}$ and
    $\overline{\iota(a_3)\iota(a_4)}$. Since rotations and reflections
    do not change the intersection of line segments, we may assume
    $\iota(a_1) = 1$. For $j \in \{1,2,3\}$ we write
    $\iota(a_j) = \e^{i\phi_j}$
    for some $0 < \phi_j < 2\pi$.

    $\{a_1,a_2\}$ and $\{a_3,a_4\}$ intersect with respect to $\sigma$ if
    and only if
    $0 < \phi_3 < \phi_2 < \phi_4$ or $0 < \phi_4 < \phi_2 < \phi_3$
    holds.
    
    The line segment $\overline{\iota(a_1)\iota(a_2)}$ divides the
    circle $\{ x \in \C | \norm{}{x} = 1 \}$ into two connected components.
    The two boundary components are parametrized by the 
    intervalls $[0,\phi_2]$ and 
    $[\phi_2,2\pi]$. We distinguish two cases:
    
    \begin{enumerate}
        \item If $\{\phi_3,\phi_4\} \cap [0,\phi_2]$ contains only one element,
            $\iota(a_3)$ and $\iota(a_4)$ lie in different connected 
            components. Since the circle is convex, the line segment 
            $\overline{\iota(a_3)\iota(a_4)}$
            lies within the circle, so it has to intersect the line
            segment $\overline{\iota(a_1)\iota(a_2)}$.
        \item If $\{\phi_3,\phi_4\} \cap [0,\phi_2]$ contains zero or two
            elements, $\iota(a_3)$ and $\iota(a_4)$ lie in the same connected
            component. Since both connected components are convex, the line
            segment $\overline{\iota(a_3)\iota(a_4)}$ 
            is completely contained in one of them. In particular, it
            does not intersect the line segment 
            $\overline{\iota(a_1)\iota(a_2)}$. \qedhere
    \end{enumerate}
\end{proof}

If we consider an
intersection--free circle representation of $(\sigma, \rho)$, 
we observe a correspondence
between bounded connected components and orbits of the product $\rho\sigma$.
\begin{center}
    \begin{tikzpicture}[scale=2]
        \begin{scope}
            \circlePoints
            \begin{scope}
                \clip (0,0) circle (1);
                \fill[yellow!40!white] (1) -- ($(2)+(1)$) -- (2) -- ($(2)+(3)$) -- (3) -- ($(3)+(4)$) -- (4) -- cycle;
            \end{scope}

            \foreach \i/\j in {2/3, 5/6, 1/4, 7/8}{
                \draw[\colE,thick] (\i) -- (\j);
            }

	    \def\rad{0.9}
	    \def\winkel{10}
	    \foreach \s in {22.5,112.5}
            {
                    \draw[thick,->] ($({\rad*cos(\s+\winkel)},{\rad*sin(\s+\winkel)})$) arc(\s+\winkel:\s+45-\winkel:\rad);
            }
				
            \def\rad{0.85}
            \def\winkel{7}
            \draw[\colE,very thick,->] ($({\rad*cos(67.5+\winkel)},{\rad*sin(67.5+\winkel)})$) -- ($({\rad*cos(112.5-\winkel)},{\rad*sin(112.5-\winkel)})$);
            
            \def\rad{0.7}
            \def\winkel{20}
            \draw[\colE,very thick,->] ($({\rad*cos(157.5-\winkel)},{\rad*sin(157.5-\winkel)})$) -- ($({\rad*cos(22.5+\winkel)},{\rad*sin(22.5+\winkel)})$);
            
            \circleCircumference
            \labelCirclePoints
        \end{scope}

        \begin{scope}[shift={(4,0)}]
            \circlePoints

            \begin{scope}
                \clip (0,0) circle (1);
                \fill[yellow!40!white] (4) -- ($(4)+(5)$) -- (5) -- ($(5)+(6)$) -- (6) -- ($(6)+(7)$) -- (7) -- ($(7)+(8)$) -- (8) -- ($(8)+(1)$) -- (1) -- cycle;
            \end{scope}

            \foreach \i/\j in {2/3, 5/6, 1/4, 7/8}{
                \draw[\colE,thick] (\i) -- (\j);
            }

            \def\rad{0.9}
            \def\winkel{10}
            \foreach \s in {157.5,-112.5,-22.5}
            {
                    \draw[thick,->] ($({\rad*cos(\s+\winkel)},{\rad*sin(\s+\winkel)})$) arc(\s+\winkel:\s+45-\winkel:\rad);
            }
            
            \def\rad{0.85}
            \def\winkel{8}
            \foreach \s in {202.5,292.5}
                    \draw[\colE,very thick,->] ($({\rad*cos(\s+\winkel)},{\rad*sin(\s+\winkel)})$) -- ($({\rad*cos(\s+45-\winkel)},{\rad*sin(\s+45-\winkel)})$);
            
            \def\rad{0.63}
            \def\winkel{0}
            \draw[\colE,very thick,->] ($({\rad*cos(22.5+\winkel)},{\rad*sin(22.5+\winkel)})$) -- ($({\rad*cos(157.5-\winkel)},{\rad*sin(157.5-\winkel)})$);

            \foreach \i/\j in {2/3, 5/6, 1/4, 7/8}{
                \draw[\colE,thick] (\i) -- (\j);
            }
            \circleCircumference
            \labelCirclePoints
        \end{scope}
    \end{tikzpicture}
\end{center}
It connects a bounded connected component with the origins of the arcs
contained in its boundary, which are in turn given as the orbits of 
$\rho\sigma$. In the illustration above, the orbits are $\{1,3\}$ and
$\{2,6,8\}$.

To prove the correspondence, we have to analyse the bounded connected
components of a circle representation in detail. This will take the remainder
of this section and culminate in theorem
\ref{Theo_IntersectionFreedomByCycleCounting}.

\begin{lem}\label{Lem_NumberConnectedComponents}
    Let $(\iota, \{Z_k\}_{1\leq k\leq 2n}, \{S_j\}_{1\leq j\leq n})$
    be an intersection--free circle representation and 
    $U \isdef \bigcup_{k = 1}^{2n} Z_k \cup \bigcup_{j = 1}^{n} S_j$.
    Then $\C \ohne U$
    has exactly $n+1$ bounded connected components.
\end{lem}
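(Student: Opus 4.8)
The plan is to recognise $U$ as the image of a finite connected plane graph and then apply Euler's formula. First I would identify the combinatorial graph $G$ underlying $U$: its vertices are the $2n$ points $\iota(1),\dots,\iota(2n)$ on the unit circle, and its edges are the $2n$ arcs $Z_k$ together with the $n$ line segments $S_j$. Since $\sigma$ is a cyclic order of length $2n$, the arcs $Z_1,\dots,Z_{2n}$ traverse the whole circle and already connect all $2n$ vertices into a single cycle; hence $G$ is connected. This gives $\lvert V\rvert = 2n$ and $\lvert E\rvert = 2n + n = 3n$.

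The key point to verify is that this really is an embedding of $G$ in the plane, i.e.\ that distinct edges meet only at common vertices. Two arcs meet only at a shared endpoint, since consecutive arcs share the point $\iota(\sigma(k))$ while non-consecutive ones are disjoint. Each line segment $S_j$ has its interior strictly inside the open unit disk, being a chord between two distinct boundary points, whereas every arc lies on the circle itself; thus an arc and a segment can only meet at one of the $2n$ boundary points. Finally, the line segments are pairwise disjoint precisely because the circle representation is intersection--free. Consequently no two edges cross in their interiors, and there are no vertices beyond the $2n$ points, so $U$ is genuinely the image of a plane graph.

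With this established, Euler's formula for a connected plane graph gives $\lvert V\rvert - \lvert E\rvert + f = 2$, where $f$ is the total number of faces (regions of $\C \ohne U$), including the unbounded one. Substituting $\lvert V\rvert = 2n$ and $\lvert E\rvert = 3n$ yields $f = 2 - 2n + 3n = n+2$. Since the arcs cover the entire circle, the exterior of the disk is the unique unbounded region, so exactly one of these $f$ faces is unbounded. Therefore $\C \ohne U$ has exactly $f-1 = n+1$ bounded connected components.

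The only genuine obstacle is the second paragraph: one must be certain that the intersection--free hypothesis, which literally constrains only the line segments, is enough to exclude all spurious crossings, so that the hypotheses of Euler's formula are met. Once the plane--graph structure is justified, the face count is immediate and the remaining bookkeeping is routine.
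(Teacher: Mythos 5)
Your proof is correct and follows essentially the same route as the paper: both interpret the arcs and segments as a connected plane graph on the $2n$ points $\iota(k)$ with $3n$ edges and apply Euler's formula to count the bounded faces. You simply spell out the verification that the drawing is a genuine plane embedding (which the paper leaves implicit), so no further changes are needed.
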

\begin{proof}
    We interpret the sets $Z_k$ and $S_j$ as the edges of a planar graph
    with vertices
    $\{\iota(k) | 1 \leq k \leq 2n\}$.
    This graph has $2n$ vertices and $2n+n$ edges. Since its 
    \texttt{Euler}--cha\-rac\-te\-ris\-tic is 1,
    there are exactly $n+1$ bounded connected components.
\end{proof}

The main technical analysis is contained in the following lemma, which 
describes
the specific form of the bounded connected components explicitly.
\begin{lem}\label{Lem_ConnectedComponentCharacterisation}
    Let $(\iota, \{Z_k\}_{1\leq k\leq 2n}, \{S_j\}_{1\leq j\leq n})$
    be an intersection--free circle representation and 
    $U \isdef \bigcup_{k = 1}^{2n} Z_k \cup \bigcup_{j = 1}^{n} S_j$.
    The connected components of the complement 
    $\C \ohne U$ have the properties:
    \begin{itemize}
        \item There is exactly one unbounded connected component
            $\{x \in \C  | \norm{}{x} > 1 \}$. Its boundary is
            $\bigcup_{k = 1}^{2n}Z_j$.
        \item There are exactly $n+1$ bounded connected components. 
            The
            boundary of each bounded connected component has the form
            $\bigcup_{t = 1}^m (Z_{k_t} \union S_{j_t})$ for some $m \in \N$
            such that
            \begin{enumerate}
                \item The intersections $Z_{k_t} \cap S_{j_t}$ and 
                    $S_{j_t} \cap Z_{k_{t+1}}$ contain exactly one element
                    for each $1 \leq t \leq m$ (we set $j_{m+1} \isdef j_1$
                    for this purpose).
                \item If $\iota(a_t)$ is the unique element in 
                    $S_{j_{t-1}} \cap Z_{k_t}$ and $\iota(b_t)$ is the unique
                    element in $Z_{k_t} \cap S_{j_t}$, then
                    $\sigma(a_t) = b_t$.
            \end{enumerate}
    \end{itemize}
\end{lem}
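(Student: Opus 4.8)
The plan is to realise $U$ as the image of a planar graph $G$ and to read off both bullets from the combinatorics of its faces. Let $G$ have vertex set $\{\iota(k)\mid 1\le k\le 2n\}$, with edges the $2n$ arcs $Z_k$ together with the $n$ segments $S_j$. Since the partition $\{M_j\}$ places each element of $\bisn{2n}$ in exactly one block, the segments form a perfect matching and are vertex--disjoint; by intersection--freeness their interiors are disjoint too, and each segment meets the circle only in its two endpoints. Hence distinct edges of $G$ meet only in shared vertices, so the embedding is genuine. The $2n$ arcs already traverse every vertex once and cover the whole unit circle, so $\bigcup_k Z_k$ equals that circle and $G$ is connected. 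The exterior $\{x\in\C\mid \norm{}{x}>1\}$ is open, connected, disjoint from $U$, while all of $U$ lies in the closed disk; therefore it is the unique unbounded component and its boundary is $\bigcup_k Z_k$, settling the first bullet.

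For the bounded components I would take the count $n+1$ from Lemma \ref{Lem_NumberConnectedComponents} and describe each boundary by tracing a face of $G$. The decisive local input is the cyclic order of the three edges at a vertex $\iota(k)$: the outgoing arc $Z_k$, the chord through $\iota(k)$, and the incoming arc $Z_{\sigma^{-1}(k)}$. Comparing tangent directions on the unit circle at $\iota(k)$ shows that, read counterclockwise, they occur as (outgoing arc, chord, incoming arc), the remaining sector — lying outside the disk between incoming and outgoing arc — belonging to the unbounded face. Thus each bounded face sits, at every one of its vertices, in one of the two sectors (outgoing arc, chord) or (chord, incoming arc), each flanked by exactly one arc and one chord. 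Because the chords form a matching, no vertex carries two chords, so no two chords are consecutive on a face boundary; and two arcs are consecutive only across the exterior sector. Hence the boundary of a bounded face is a closed walk alternating strictly between arcs and chords, of the form $\bigcup_{t=1}^m (Z_{k_t}\cup S_{j_t})$, where consecutive edges share precisely the connecting vertex. This is property (1).

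It remains to fix the orientation in property (2). I would traverse each bounded boundary walk with the face on the left. Every arc $Z_{k_t}$ is a piece of the unit circle, and the face, being bounded, lies on its interior side; as the interior of the disk is exactly the left--hand side of the counterclockwise--oriented circle, the walk runs along $Z_{k_t}$ from its origin $\iota(k_t)$ to its target $\iota(\sigma(k_t))$. In the notation of the lemma the entry vertex $\iota(a_t)\in S_{j_{t-1}}\cap Z_{k_t}$ is then the origin and the exit vertex $\iota(b_t)\in Z_{k_t}\cap S_{j_t}$ is the target, so $b_t=\sigma(a_t)$, as claimed. That this is compatible with the alternation can be re--checked from the cyclic vertex order above: arriving along an incoming arc one turns onto the chord, and arriving along a chord one turns onto the outgoing arc.

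The main obstacle I anticipate is local bookkeeping rather than any global idea: one must pin down the counterclockwise order of the three edges at each vertex and verify that the rule ``keep the face on the left'' is simultaneously consistent with the strict arc/chord alternation and with the counterclockwise traversal of every arc. A point to treat with care is the degenerate configuration where a chord joins circle--adjacent vertices $\iota(k)$ and $\iota(\sigma(k))$: then $Z_k$ and that chord bound a bigon and share both endpoints, so the ``exactly one element'' clause in property (1) must be read for the genuine, non--parallel incidences along the boundary walk.
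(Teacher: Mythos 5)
Your argument is correct, and for the most part it runs parallel to the paper's: both proofs view $U$ as a planar graph on the vertices $\iota(k)$ and derive the arc/chord alternation from a local analysis of the three edges meeting at each vertex (the paper phrases this as ``the segment through $\iota(k)$ separates the two adjacent arcs into different components'', you phrase it as ``the corner between the incoming and outgoing arcs is the exterior one, so every interior corner is flanked by one arc and one chord''; these are the same observation). Where you genuinely diverge is property (2): the paper starts from an arbitrary labelling of the boundary, notes that either $\sigma(a_1)=b_1$ or $\sigma(b_1)=a_1$, reverses the ordering in the second case, and then propagates the orientation by induction along the walk. You instead fix the orientation globally by traversing each bounded face with the face on the left, and use that every bounded face lies inside the disk, i.e.\ on the left of each arc run from origin to target; this kills the case distinction and the induction in one stroke and is arguably cleaner. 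You also correctly flag a point the paper silently ignores: when a chord joins circle--adjacent points $\iota(k)$ and $\iota(\sigma(k))$ (a bigon face, which by Lemma \ref{Lem_PermutationFixedPoint} always occurs in the intersection--free case), the set $Z_{k_1}\cap S_{j_1}$ literally contains two points, so property (1) must be read as counting incidences along the boundary walk rather than set--theoretic intersections. That reading is the one actually used downstream in Lemma \ref{Lem_BijectionConnectedComponentsOrbits}, so your caveat improves on the paper's statement rather than revealing a flaw in your proof.
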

This lemma characterises the boundary of the bounded connected components.
The first condition states that each boundary consists of an alternating
sequence of complete arcs and line segments. The second one defines the
order in which this sequence should be traversed.
\begin{proof}
    Since all line segments lie in $\{ x \in \C | \norm{}{x} \leq 1 \}$ and
    $\bigcup_{j=1}^{2n} Z_j = \{ x \in \C | \norm{}{x}=1 \}$,
    the claim concerning the unbounded connected component is clear.

    Let $\kappa$ be a bounded connected component. Since the circle 
    representation is intersection--free, the boundary of $\kappa$ is the
    union of some $Z_k$ and $S_j$.
    
    \begin{enumerate}
        \item
            Let $\iota(k)$ be in the boundary of $\kappa$. This lies in $Z_k$,
            $Z_{k+1}$ and a line segment. 
            Since the line segment divides the circle
            $\{x \in \C \mid \norm{}{x} \leq 1\}$ into two connected components,
            it lies in the boundary of exactly two connected components (since they
            are
            intersection--free). As
            $\iota(k)$ is an endpoint for the segment, any connected component
            which
            has the line segment as a boundary will also have one of the arcs as 
            boundary. Since the two connected components adjacent to the 
            segment contain different arcs, $Z_k$ and $Z_{k+1}$ cannot be
            both contained in $\kappa$.

            Therefore, for each point $\iota(k)$ in the boundary of $\kappa$,
            there is exactly one arc and exactly one line segment in the
            boundary. It follows that arcs and line segments alternate
            along the boundary of $\kappa$, which shows the first property.
        \item We have either $\sigma(a_1) = b_1$ or $\sigma(b_1) = a_1$. If
            the second equality holds, we invert the ordering of the sets in the 
            boundary. Hence, suppose $\sigma(a_1) = b_1$.

            Then $\iota(b_1)$ is the target of an arc. The other point in
            $S_{j_1}$ is $\iota(a_2)$. This point is the source of the
            arc $Z_{k_2}$ (otherwise the two arcs would be separated by
            the line segment). In particular, we have $\sigma(a_2) = b_2$.
            The full claim follows by induction. \qedhere
    \end{enumerate}
\end{proof}

This technical characterisation allows the construction of the bijection
between connected components and orbits of a group.
\begin{lem}\label{Lem_BijectionConnectedComponentsOrbits}
    Let $\sigma$ be a cyclic order on $\bisn{2n}$ and $\rho$ a fix--point--free
    involution on $\bisn{2n}$. Let 
    $(\iota, \{Z_k\}_{1\leq k\leq 2n}, \{S_j\}_{1\leq j \leq n})$ be an
    intersection--free circle representation of $(\sigma,\rho)$ and use
    the notation from Lemma \ref{Lem_ConnectedComponentCharacterisation}.
    
    Then we have a bijection between the bounded connected components of
    $\C \ohne U$ and the orbits of $\rho\sigma$: a connected
    component with boundary 
    $\bigcup_{t = 1}^m (Z_{k_t} \union S_{j_t})$
    is mapped to
    $\bigcup_{t = 1}^m (S_{j_{t-1}} \cap Z_{k_t})$ (with $j_{0} \isdef j_m$),
    which consists of the origins of the arcs contained in the
    boundary of the connected component.
\end{lem}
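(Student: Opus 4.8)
The plan is to verify two things: first, that the given assignment is well-defined and actually produces orbits of $\rho\sigma$; and second, that it is a bijection. The main structural input is Lemma~\ref{Lem_ConnectedComponentCharacterisation}, which tells me that the boundary of a bounded component $\kappa$ is an alternating cycle of arcs $Z_{k_t}$ and segments $S_{j_t}$, traversed so that $\sigma(a_t)=b_t$, where $\iota(a_t)$ is the origin of the arc $Z_{k_t}$ (lying in $S_{j_{t-1}}\cap Z_{k_t}$) and $\iota(b_t)$ is its target (lying in $Z_{k_t}\cap S_{j_t}$). The first thing I would do is compute how $\rho\sigma$ acts on the indices $a_t$. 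Since $\iota(a_t)$ and $\iota(b_t)$ are the two endpoints of the arc $Z_{k_t}$, we have $\sigma(a_t)=b_t$. The segment $S_{j_t}$ connects $\iota(b_t)$ to $\iota(a_{t+1})$, so $a_{t+1}$ and $b_t$ are the two elements of the pair $M_{j_t}$, giving $\rho(b_t)=a_{t+1}$. Hence $\rho\sigma(a_t)=\rho(b_t)=a_{t+1}$, so the origin indices $\{a_1,\dots,a_m\}$ form exactly one orbit of $\rho\sigma$. This shows the map lands in the set of orbits and that its image is the orbit traced by the boundary.

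Next I would establish injectivity and surjectivity together by a counting/partition argument. The cleanest route is to show that the assigned origin-index sets partition $\bisn{2n}$: every element of $\bisn{2n}$ is the origin $\iota(k)$ of exactly one arc $Z_k$, and $\iota(k)$ lies on the boundary of exactly one bounded component as an arc-origin. The reason is that at each circle point $\iota(k)$ exactly one arc-origin and one segment-end meet on the side of a given bounded component (from the alternation established in Lemma~\ref{Lem_ConnectedComponentCharacterisation}, property~1, together with the fact that each segment bounds exactly two components). So each $k$ contributes its origin to the boundary-orbit of precisely one bounded component. This means the map from bounded components to $\rho\sigma$-orbits, once we know each component's image is a single orbit, partitions $\bisn{2n}$ into the components' images, and these images are exactly the orbits of $\rho\sigma$. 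Two distinct components cannot share an origin index, giving injectivity; and since every orbit of $\rho\sigma$ is a union of images, and the images are themselves whole orbits, surjectivity follows.

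I would finish with a clean counting sanity check that makes surjectivity automatic rather than requiring separate verification. By Lemma~\ref{Lem_NumberConnectedComponents} there are exactly $n+1$ bounded connected components. The index set $\bisn{2n}$ is partitioned by the component-images into $n+1$ nonempty blocks, each of which I have shown is a single $\rho\sigma$-orbit. Since distinct orbits are disjoint, and I have $n+1$ disjoint blocks each equal to a full orbit covering all of $\bisn{2n}$, the number of $\rho\sigma$-orbits is exactly $n+1$ and each block is exactly one orbit. The correspondence component $\mapsto$ boundary-origin-set is then a bijection onto the set of orbits.

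The main obstacle I anticipate is the bookkeeping that pins down which element of each segment-pair and arc-pair is the origin versus the target, i.e.\ keeping the roles of $a_t$ and $b_t$ straight so that the relation $\rho\sigma(a_t)=a_{t+1}$ comes out rather than its inverse. This is exactly the place where Lemma~\ref{Lem_ConnectedComponentCharacterisation}(2), with its convention of reversing the traversal when $\sigma(b_1)=a_1$, is doing the real work: it guarantees a consistent orientation so that $\sigma$ always maps origin to target along each arc. Once that orientation is fixed, the identity $\rho\sigma(a_t)=\rho(b_t)=a_{t+1}$ is immediate, and the rest is the partition/counting argument, which is routine given Lemma~\ref{Lem_NumberConnectedComponents}.
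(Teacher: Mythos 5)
Your proposal is correct and follows essentially the same route as the paper: the identity $\rho\sigma(a_t)=\rho(b_t)=a_{t+1}$ derived from Lemma \ref{Lem_ConnectedComponentCharacterisation} gives well--definedness, and the observation that each element of $\bisn{2n}$ is the origin of exactly one arc, which bounds exactly one bounded component, yields injectivity and surjectivity (the paper phrases this as ``only sources are mapped'' plus picking the component containing the arc with source $b$). Your closing count of $n+1$ blocks is a harmless extra consistency check, not a different argument.
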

\begin{proof}
    The map is well--defined: 
    Let $\iota(a_t)$ be the
    unique element of $S_{k_{t-1}} \cap Z_{k_t}$, then $\sigma(\iota(a_t))$
    is the unique element in $Z_{k_t} \cap S_{j_t}$. In particular, it
    lies in $S_{j_t}$. Then $\rho\sigma(\iota(a_t))$ is the other endpoint of
    this line segment, and therefore the unique element in the
    intersection $S_{j_t} \cap Z_{k_{t+1}}$. Therefore, this image
    is an orbit of $\rho\sigma$.

    To show injectivity, we consider an element $\iota(t)$. This is the
    source of one arc and the target of another. Since only sources are
    mapped by our construction, the images of different bounded 
    connected components are disjoint.

    For surjectivity, consider an orbit $B$ of $\rho\sigma$. This orbit
    is not empty, so there is a $b \in B$. This $b$ is the source of
    one arc $Z$. Then the unique bounded connected component with $Z$
    in its boundary is mapped to $B$.
\end{proof}
By combining Lemma \ref{Lem_BijectionConnectedComponentsOrbits} and
Lemma \ref{Lem_NumberConnectedComponents}, we have shown that
a circle representation of $(\sigma,\rho)$ being intersection--free implies that
$\rho\sigma$ has exactly $n+1$ cycles.

We now proceed in the opposite direction: If $\sigma\rho$ has exactly $n+1$ 
cycles, we construct an intersection--free circle representation of 
$(\sigma,\rho)$.

To do so, we need some general properties of permutations.
\begin{lem}\label{Lem_PermutationFixedPoint}
    Let $\pi \in \Sym(\bisn{n})$ be a permutation with more than $\frac{n}{2}$ cycles.
    Then $\pi$ has at least one fixed point.
\end{lem}
\begin{proof}
    Suppose $\pi$ has no fixed point. Then every orbit of $\langle\pi\rangle$
    contains at least two 
    elements. Since orbits are disjoint, $\bisn{n}$ has to contain more than
    $\frac{n}{2} \cdot 2$ elements, which is a contradiction.
\end{proof}

If $\rho\sigma$ has exactly $n+1$ cycles, it needs to have a fixed point.
This is significant: In a circle representation, a fixed point 
corresponds to a line
segment whose end--points have minimal distance. In particular, it can
be removed from a circle representation without changing whether it is
intersection--free.

To prove this reduction, we first show that after the removal of this
line segment there is another fixed point. Then we use an inductive
argument to prove the complete claim.

\begin{lem}\label{Lem_PermutationReduct}
    Let $\sigma$ be a $2n$--cycle and $\rho$ a fix--point--free 
    involution in $\Sym(\bisn{2n})$,
    such that $\rho\sigma$ has exactly $n+1$ cycles ($n > 1$).
    Let $f \in \bisn{2n}$ be a fixed point of $\rho\sigma$.

    We define permutations on $\bisn{2n}\ohne\{f,\sigma(f)\}$ 
    (remember that $\{f,\sigma(f)\}$ is an orbit of $\rho$):
    \begin{align*}
        \hat{\rho}(k) &\isdef \rho(k)
        &
        \hat{\sigma}(k) &\isdef 
            \begin{cases}
                \sigma^3(k) & \sigma(k) = f \\
                \sigma(k) & \text{otherwise}
            \end{cases}
    \end{align*}

    Then $\hat{\rho}\hat{\sigma}$ has exactly $n$ cycles.
\end{lem}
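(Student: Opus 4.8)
The plan is to show that $\hat\rho\hat\sigma$ arises from $\rho\sigma$ by two elementary operations — deleting a fixed point and splicing one non-fixed point out of its cycle — and to track how each changes the number of cycles. Throughout I use the convention $\rho\sigma(x) = \rho(\sigma(x))$, which is the one compatible with the lemma's parenthetical remark: since $f$ is a fixed point of $\rho\sigma$, we have $\rho(\sigma(f)) = f$, hence $\rho(f) = \sigma(f)$, so $\{f,\sigma(f)\}$ is indeed a $\rho$--orbit. Consequently $\hat\rho$ restricts to a fix--point--free involution on $C \isdef \bisn{2n}\ohne\{f,\sigma(f)\}$. The points $f$ and $\sigma(f)$ are consecutive along the cycle $\sigma$, so $\hat\sigma$ — which sends $\sigma^{-1}(f)$ to $\sigma^2(f)$ and otherwise agrees with $\sigma$ — is a well--defined $(2n-2)$--cycle on $C$. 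Here I would use $n > 1$ (so $2n \geq 4$) to guarantee that $\sigma^{-1}(f), f, \sigma(f), \sigma^2(f)$ are pairwise distinct, so that these definitions do not collide.

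The heart of the argument is a direct comparison of $\hat\pi \isdef \hat\rho\hat\sigma$ with $\pi \isdef \rho\sigma$ on $C$. For every $k \in C$ with $k \neq \sigma^{-1}(f)$ one has $\hat\sigma(k) = \sigma(k) \in C$, whence $\hat\pi(k) = \rho(\sigma(k)) = \pi(k)$, and one checks $\pi(k) \in C$ as well. For the single exceptional element $k = \sigma^{-1}(f)$ one computes $\hat\sigma(\sigma^{-1}(f)) = \sigma^2(f)$ and therefore $\hat\pi(\sigma^{-1}(f)) = \rho(\sigma^2(f)) = \pi(\sigma(f))$. Thus $\hat\pi$ agrees with $\pi$ everywhere on $C$ except that at $\sigma^{-1}(f)$ it jumps directly to $\pi(\sigma(f))$, skipping the intermediate value $\sigma(f)$.

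From this the cycle count follows by bookkeeping. The fixed point $f$ forms its own cycle of $\pi$; deleting it leaves a permutation of $\bisn{2n}\ohne\{f\}$ with $n$ cycles. On this set $\sigma^{-1}(f)$ is the unique $\pi$--preimage of $\sigma(f)$, and $\pi(\sigma(f)) = \rho(\sigma^2(f))$, so the previous paragraph shows that $\hat\pi$ is precisely the permutation obtained by splicing $\sigma(f)$ out of its cycle, reconnecting its predecessor $\sigma^{-1}(f)$ to its successor $\rho(\sigma^2(f))$. Splicing out an element that is not a fixed point preserves the number of cycles, so it remains only to verify that $\sigma(f)$ is not fixed by $\pi$: indeed $\pi(\sigma(f)) = \sigma(f)$ would force $\sigma^2(f) = f$, which is impossible for $n > 1$. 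Hence $\hat\pi$ has $(n+1) - 1 = n$ cycles, as claimed.

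The main obstacle is purely organisational: keeping the composition convention fixed and confirming that the four points $\sigma^{-1}(f), f, \sigma(f), \sigma^2(f)$ are genuinely distinct, so that the single ``skip'' in $\hat\pi$ really corresponds to removing exactly one fixed point and splicing out exactly one ordinary point. In particular one must rule out any accidental coincidence — for instance $\sigma(f)$ sitting in a $2$--cycle with $\sigma^{-1}(f)$ — which, although it would turn that $2$--cycle into a fixed point of $\hat\pi$, still leaves the count unchanged, and this is exactly what the ``non--fixed--point'' check secures.
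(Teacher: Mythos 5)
Your proof is correct and follows essentially the same route as the paper's: both verify that $\hat{\rho}\hat{\sigma}$ agrees with $\rho\sigma$ on $\bisn{2n}\ohne\{f,\sigma(f)\}$ except at $\sigma^{-1}(f)$, where it skips over $\sigma(f)$, and both invoke $n>1$ to ensure $\sigma^2(f)\neq f$ so that the cycle through $\sigma(f)$ survives with at least one element. The paper organises this as a case analysis on the orbits of $\rho\sigma$ rather than as ``delete the fixed point $f$, then splice out $\sigma(f)$,'' but the underlying computations are identical.
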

\begin{proof}
    To show the claim, we analyse how the orbits $B$ of $\rho\sigma$ change.
    \begin{enumerate}
        \item \underline{$B = \{f\}$}: This orbit is removed.
        \item \underline{$\sigma(f) \in B$}: The only element on which the
            orbit might change is the precursor of $\sigma(f)$, namely
            $(\rho\sigma)^{-1}\sigma(f) \in B$. Since $\rho = \rho^{-1}$, we
            have
            \begin{equation*}
                (\rho\sigma)^{-1}\sigma(f) = \sigma^{-1}\rho^{-1}\sigma(f) = \sigma^{-1}(\rho\sigma)(f) = \sigma^{-1}(f)
            \end{equation*}
            Now we compute its image under the modified permutations:
            \begin{equation*}
                \hat{\rho}\hat{\sigma}\sigma^{-1}(f) = \rho\sigma^3\sigma^{-1}(f) = (\rho\sigma)(\sigma(f))
            \end{equation*}
            In other words, the image of $\sigma^{-1}(f)$ under 
            $\hat{\rho}\hat{\sigma}$
            is the same as the image of $\sigma(f)$ under
            $\rho\sigma$. The new orbit is $B\ohne\{\sigma(f)\}$.

            We have to show that this new orbit is not empty. For it
            to be empty, we would need $\sigma^2(f) = f$, i.\,e. $n=1$.
        \item \underline{$f,\sigma(f) \not\in B$}: In this case, nothing 
            changes. The only difference can appear if there is an element $k$ 
            of the orbit such that $\sigma(k) = f$. In this case 
            $\rho\sigma(k) = \sigma(f)$.
    \end{enumerate}
    In total, the number of orbits is reduced by 1.
\end{proof}

We now proceed with formulating the induction.
\begin{lem}\label{Lem_FixedPointInduction}
    Let $\sigma$ be a $2n$--cycle and $\rho$ a fix--point--free involution
    in $\Sym(\bisn{2n})$,
    such that $\rho\sigma$ has exactly $n+1$ cycles. Then every circle
    representation of $(\sigma,\rho)$ 
    is intersection--free.
\end{lem}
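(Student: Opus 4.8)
The plan is to prove the statement by induction on $n$, using the fixed--point reduction prepared in Lemmas \ref{Lem_PermutationFixedPoint} and \ref{Lem_PermutationReduct} together with the intersection criterion from Lemma \ref{Lem_IntersectionFreedomOfCircleAndCycle}. For the base case $n = 1$ there is only a single line segment, so any circle representation is trivially intersection--free. Hence assume $n > 1$, and fix an arbitrary circle representation $\mathcal{C} = (\iota, \{Z_k\}, \{S_j\})$ of $(\sigma,\rho)$; the goal is to show that its line segments are pairwise disjoint.

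First I would locate a short chord. Since $\rho\sigma$ has $n+1 > \frac{2n}{2}$ cycles, Lemma \ref{Lem_PermutationFixedPoint} provides a fixed point $f$ of $\rho\sigma$. From $\rho\sigma(f) = f$ we obtain $\rho(f) = \sigma(f)$, so $\{f,\sigma(f)\}$ is an orbit of $\rho$ whose two elements are cyclically \emph{adjacent} under $\sigma$. Geometrically, its line segment $S_{j_0} = \overline{\iota(f)\,\iota(\sigma(f))}$ joins two neighbouring points of the circle. By the chord--crossing criterion established in the proof of Lemma \ref{Lem_IntersectionFreedomOfCircleAndCycle}, no other segment can meet $S_{j_0}$: every other segment has both endpoints different from $\iota(f)$ and $\iota(\sigma(f))$, so both lie on the long arc cut off by $S_{j_0}$ (the short arc contains no further points), the two pairs do not interleave, and hence the chords do not cross. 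Consequently $\mathcal{C}$ is intersection--free if and only if the remaining segments $\{S_j\}_{j \neq j_0}$ are pairwise disjoint.

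It remains to reduce to $2(n-1)$ points and transfer back, which I expect to be the main obstacle. By Lemma \ref{Lem_PermutationReduct} the modified permutations $\hat\sigma,\hat\rho$ on $\bisn{2n}\ohne\{f,\sigma(f)\}$ satisfy that $\hat\rho\hat\sigma$ has exactly $n$ cycles, which is the hypothesis of the present lemma with $n-1$ in place of $n$. Placing the remaining $2n-2$ points evenly around the circle in the cyclic order determined by $\hat\sigma$ yields a circle representation $\hat{\mathcal{C}}$ of $(\hat\sigma,\hat\rho)$, which is intersection--free by the induction hypothesis; by Lemma \ref{Lem_IntersectionFreedomOfCircleAndCycle} the orbit partition of $\hat\rho$ is therefore intersection--free with respect to $\hat\sigma$. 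The delicate point is that this combinatorial fact transfers to $\sigma$: deleting the two adjacent elements $f,\sigma(f)$ leaves the cyclic order of all remaining points unchanged, and the orbits of $\hat\rho$ are precisely the orbits of $\rho$ other than $\{f,\sigma(f)\}$. Since crossing of chords depends only on the cyclic interleaving of their four endpoints, this shows the remaining orbits of $\rho$ are intersection--free with respect to $\sigma$, so by Lemma \ref{Lem_IntersectionFreedomOfCircleAndCycle} the segments $\{S_j\}_{j\neq j_0}$ of $\mathcal{C}$ are pairwise disjoint. Combined with the previous paragraph, $\mathcal{C}$ is intersection--free, which closes the induction.
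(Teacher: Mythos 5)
Your proof is correct and follows essentially the same route as the paper: induction on $n$, extracting a fixed point of $\rho\sigma$ via Lemma \ref{Lem_PermutationFixedPoint}, observing that the resulting chord joins adjacent points and hence crosses nothing, and reducing via Lemma \ref{Lem_PermutationReduct}. You are in fact slightly more careful than the paper in spelling out why intersection--freedom of the reduced representation $(\hat\sigma,\hat\rho)$ transfers back to the remaining segments of the original one, a step the paper leaves implicit.
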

\begin{proof}
    We prove the claim by induction on $n$. For $n=1$, we have 
    $\sigma = \rho = (1,2)$. Therefore, there is only one line segment,
    which cannot intersect with any other one.

    Now assume $n > 1$. By Lemma \ref{Lem_PermutationFixedPoint} 
    there is a fixed point $f$ of $\rho\sigma$,
    i.\,e. $\rho(\sigma(f)) = f$. Denote $g \isdef \sigma(f)$, then we have
    $\rho(g) = f$.

    The line segment between $f$ and $g$ cannot intersect any other
    line segment, since $f$ and $g$ are neighbours on the circle. We only
    need to show that the other line segments do not intersect.

    To do so, we define $\hat{\rho}$ and $\hat{\sigma}$ as in Lemma
    \ref{Lem_PermutationReduct}.
    Their product has exactly $n$ cycles. By the
    induction hypothesis, their circle representation is 
    intersection--free. 
\end{proof}

We can now formulate the main theorem.

\begin{theo}\label{Theo_IntersectionFreedomByCycleCounting}
    Let $\sigma$ be a $2n$--cycle and $\rho$ a fix--point--free 
    involution on $\bisn{2n}$. Then the following are equivalent:
    \begin{itemize}
        \item The orbits of $\rho$ are intersection--free with
            regards to $\sigma$.
        \item $\rho\sigma$ has exactly $n+1$ orbits on $\bisn{2n}$.
    \end{itemize}
\end{theo}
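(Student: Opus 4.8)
The plan is to prove both implications by passing through a circle representation, which is the geometric bridge between the combinatorial intersection-freedom of Definition \ref{Def_IntersectionFreedom} and the cycle count of $\rho\sigma$. First I would observe that a circle representation of $(\sigma,\rho)$ always exists: requiring $\iota(\sigma(m)) = \e^{\frac{2\pi i}{2n}}\iota(m)$ pins down $\iota$ up to rotation once one point is placed, and the arcs and line segments are then determined. I would also note that $\rho\sigma$ and $\sigma\rho$ are conjugate (via $\sigma$), hence have the same number of cycles, so stating the theorem in terms of $\rho\sigma$ matches the earlier lemmas phrased with $\sigma\rho$ without any discrepancy.

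For the forward direction, assume the orbits of $\rho$ are intersection-free with respect to $\sigma$. By Lemma \ref{Lem_IntersectionFreedomOfCircleAndCycle}, every circle representation of $(\sigma,\rho)$ is then intersection-free. Fixing one such representation, Lemma \ref{Lem_BijectionConnectedComponentsOrbits} provides a bijection between its bounded connected components and the orbits of $\rho\sigma$, and Lemma \ref{Lem_NumberConnectedComponents} counts exactly $n+1$ bounded components. Composing these two facts shows that $\rho\sigma$ has exactly $n+1$ orbits. For the reverse direction, assume $\rho\sigma$ has exactly $n+1$ orbits. This is precisely the hypothesis of Lemma \ref{Lem_FixedPointInduction}, whose induction — peeling off the line segment at a fixed point furnished by Lemma \ref{Lem_PermutationFixedPoint}, using the orbit-count reduction of Lemma \ref{Lem_PermutationReduct} — yields that every circle representation of $(\sigma,\rho)$ is intersection-free. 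A final application of Lemma \ref{Lem_IntersectionFreedomOfCircleAndCycle} translates this back into intersection-freedom of the orbits of $\rho$ with respect to $\sigma$, closing the equivalence.

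The theorem is thus an assembly of the preceding lemmas rather than a new source of difficulty; the real content was already discharged in the two directions, namely the bijection of Lemma \ref{Lem_BijectionConnectedComponentsOrbits} for the ``only if'' part and the fixed-point induction of Lemma \ref{Lem_FixedPointInduction} for the ``if'' part. The only point demanding care is keeping the quantifiers straight: because Lemma \ref{Lem_IntersectionFreedomOfCircleAndCycle} is stated for \emph{any} circle representation, no particular choice of $\iota$ needs to be justified, and the existence remark above guarantees there is at least one representation on which to run the argument.
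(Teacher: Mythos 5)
Your proof is correct and follows essentially the same route as the paper: Lemma \ref{Lem_IntersectionFreedomOfCircleAndCycle} as the bridge to circle representations, Lemmas \ref{Lem_NumberConnectedComponents} and \ref{Lem_BijectionConnectedComponentsOrbits} for the forward direction, and Lemma \ref{Lem_FixedPointInduction} for the converse. Your added remarks on the existence of a circle representation and the conjugacy of $\rho\sigma$ and $\sigma\rho$ are sound bookkeeping that the paper leaves implicit.
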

\begin{proof}
    By Lemma \ref{Lem_IntersectionFreedomOfCircleAndCycle}, the
    orbits of $\rho$ are intersection--free if and only if a circle
    representation is intersection--free. Combining Lemma
    \ref{Lem_NumberConnectedComponents} and Lemma
    \ref{Lem_BijectionConnectedComponentsOrbits} then gives one
    direction of the proof. The other direction follows from
    Lemma \ref{Lem_FixedPointInduction}.
\end{proof}

\begin{cor}\label{Cor_FoldingOntoTriangle}
    Let $(V,E,F,\prec)$ be a closed simplicial surface with
    vertex--3--colouring $c_V$ and induced edge--3--colouring $c_E$. 

    There is a triangle--folding of the surface if and only if there
    exists an $|F|$--cycle $\sigma \in \Sym(F)$ such that its product
    with all colour involutions has exactly $\frac{|F|}{2}+1$ cycles.
\end{cor}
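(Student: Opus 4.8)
The plan is to assemble Corollary~\ref{Cor_FoldingOntoTriangle} from the machinery already built up, treating it as a translation between the surface-theoretic language of triangle-folding and the permutation-theoretic statement of Theorem~\ref{Theo_IntersectionFreedomByCycleCounting}. The key observation is that the corollary is really a ``per-colour'' assembly of the equivalence proved in the theorem, applied simultaneously to all three colour classes.

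\begin{proof}
    Since the surface is closed, each edge is incident to exactly two faces,
    so every colour $C$ of $c_E$ gives rise to a colour partition in the
    sense of Definition~\ref{Def_ColourPartitionInvolution}, and hence to a
    fix--point--free involution $\rho_C \in \Sym(F)$, its colour involution.
    Writing $|F| = 2n$, each $\rho_C$ is an involution on $\bisn{2n}$ whose
    orbits are exactly the two--element--subsets of the colour partition.

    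Let $<$ be a total order on $F$ and let $\sigma \isdef \sigma_<$ be the
    induced cyclic order from Definition~\ref{Def_InducedOrders}, which is an
    $|F|$--cycle. By Remark~\ref{Rem_TriangleFoldingByIntersectionFreedom},
    $<$ is a triangle--folding if and only if every colour partition of $c_E$
    is intersection--free with respect to $<$. By
    Remark~\ref{Rem_ConversionBetweenOrders}, a colour partition is
    intersection--free with respect to $<$ if and only if it is
    intersection--free with respect to $\sigma_<$, since the two notions of
    intersection--freedom are preserved in both directions by passing between
    the linear order and its induced cyclic order. Finally,
    Theorem~\ref{Theo_IntersectionFreedomByCycleCounting}, applied to the pair
    $(\sigma, \rho_C)$ for each colour $C$, states that the orbits of $\rho_C$
    are intersection--free with respect to $\sigma$ if and only if
    $\rho_C\sigma$ has exactly $n+1 = \frac{|F|}{2}+1$ cycles. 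Combining these
    equivalences over all three colours shows that $<$ is a triangle--folding
    if and only if, for the associated $|F|$--cycle $\sigma$, the product
    $\rho_C\sigma$ has exactly $\frac{|F|}{2}+1$ cycles for every colour $C$;
    that is, $\sigma$'s product with all colour involutions has exactly
    $\frac{|F|}{2}+1$ cycles.

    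It remains to pass between total orders and $|F|$--cycles. If a
    triangle--folding $<$ exists, then $\sigma_<$ is an $|F|$--cycle with the
    required cycle-count property. Conversely, given an $|F|$--cycle $\sigma$
    whose products with all colour involutions have exactly
    $\frac{|F|}{2}+1$ cycles, choose any face $f_0 \in F$ and let $<_\sigma$ be
    the induced linear order from Definition~\ref{Def_InducedOrders}; running
    the chain of equivalences above in reverse shows that $<_\sigma$ is a
    triangle--folding. This establishes both directions.
\end{proof}

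The statement is essentially a bookkeeping corollary, so I anticipate no
serious obstacle; the only point requiring care is that the product in
Theorem~\ref{Theo_IntersectionFreedomByCycleCounting} is written $\rho\sigma$
whereas the corollary speaks of products with the colour involutions, so I
would make sure the convention matches (the count of cycles of $\rho_C\sigma$
and of $\sigma\rho_C$ agree, as the two are conjugate, so either order is
fine). I would also note explicitly that the equivalence must hold
\emph{simultaneously} for all three colours, which is why the final condition
quantifies over all colour involutions at once.
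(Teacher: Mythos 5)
Your proof is correct and follows essentially the same route as the paper: Remark~\ref{Rem_TriangleFoldingByIntersectionFreedom}, then Remark~\ref{Rem_ConversionBetweenOrders} to pass between the linear and cyclic order, then Theorem~\ref{Theo_IntersectionFreedomByCycleCounting} applied to each colour involution. Your added care about the direction of conversion and about $\rho\sigma$ versus $\sigma\rho$ is sound but not a different argument.
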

\begin{proof}
    By Remark \ref{Rem_TriangleFoldingByIntersectionFreedom}, $<$ is a
    triangle--folding if and only if all colour partitions are
    intersection--free with respect to $<$. By remark
    \ref{Rem_ConversionBetweenOrders} we can replace $<$ by a cyclic
    order $\sigma$. Since the colour partitions are the orbits
    of the colour involutions, the claim follows from theorem
    \ref{Theo_IntersectionFreedomByCycleCounting}.
\end{proof}

\section{Orientability}\label{Sec_Orientation}
In this section, we show that only orientable simplicial surfaces can
be folded onto a triangle. 
This is based on the following observation:
\begin{rem}\label{Rem_DistanceInCircleRepresentation}
    Let $\overline{xy}$ be a line segment in an intersection--free
    circle representation. Then $y = \e^{2\pi i \frac{k}{2n}}x$
    with $k$ odd.
\end{rem}
\begin{proof}
    By the definition of circle representations, there is a $k \in \Z$
    with 
    $y = \e^{2\pi i \frac{k}{2n}}x$.

    This line segment divides the circle into two connected components.
    The boundary of one of them consists precisely of the points
    $\{ \e^{2\pi i\frac{l}{2n}} | 0 \leq l \leq k \}$.
    These points have to be connected in pairs of two, therefore the
    number of elements in this set has to be even. This is only possible
    if $k$ is odd.
\end{proof}

Since a circle representation subdivides the circle into an even number
of points, the concept of even and odd distances makes sense.
\begin{defi}\label{Def_EvenOddDistance}
    Let $\mathcal{C}$ be a circle representation with map 
    $\iota: \bisn{2n} \to \C$. For two
    elements $a,b \in \bisn{2n}$ there is a $k \in \Z$ with
    \begin{equation*}
        \iota(y) = \e^{2\pi i\frac{k}{2n}}\iota(x).
    \end{equation*}
    If $k$ is even, $a$ and $b$ have \textbf{even distance}. Otherwise,
    they have \textbf{odd distance}.
\end{defi}

\begin{rem}\label{Rem_EvenDistanceEquivalence}
    All points with even distance from each other form an
    equivalence class.
\end{rem}
Combining Remark \ref{Rem_DistanceInCircleRepresentation} and Remark
\ref{Rem_EvenDistanceEquivalence}, we get a necessary 
criterion for foldability:
\begin{lem}\label{Lem_EvenDistanceClassByInvolutions}
    Let $\rho_1,\rho_2,\rho_3$ be three fix--point--free involutions
    on $\bisn{2n}$ such that there is a cyclic order $\sigma$
    that is intersection--free with respect to the orbit partition of 
    each of them.
    
    If $\langle \rho_1,\rho_2,\rho_3\rangle$ is transitive on
    $\bisn{2n}$, the even--distance equivalence relation is
    only dependent on the involutions.
\end{lem}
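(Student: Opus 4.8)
The plan is to encode the even--distance relation as a two--colouring of $\bisn{2n}$ and to show that this colouring is forced, up to swapping the two colours, by the involutions alone. First I would fix an intersection--free circle representation for $\sigma$ and recall that $\iota(\sigma^k(a)) = \e^{2\pi i \frac{k}{2n}}\iota(a)$, so that $a$ and $b$ have even distance precisely when $b = \sigma^k(a)$ for some even $k$. Fixing a basepoint, this defines a parity function $p_\sigma \colon \bisn{2n} \to \Z/2\Z$ (the parity of the $\sigma$--exponent), and by Remark \ref{Rem_EvenDistanceEquivalence} the two even--distance classes are exactly the fibres $p_\sigma^{-1}(0)$ and $p_\sigma^{-1}(1)$.

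The key structural input is Remark \ref{Rem_DistanceInCircleRepresentation}: every line segment joins points of odd distance. Since the orbits of each $\rho_i$ are precisely the line segments of the representation, this yields $p_\sigma(\rho_i(a)) = p_\sigma(a) + 1$ for every $a \in \bisn{2n}$ and every $i \in \{1,2,3\}$; in other words, each involution $\rho_i$ interchanges the two parity classes.

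Now I would establish independence of the chosen cyclic order. Let $\sigma'$ be any other cyclic order that is intersection--free with respect to all three orbit partitions, with associated parity function $p_{\sigma'}$; by the same argument each $\rho_i$ flips $p_{\sigma'}$ as well. Consider the sum $q \isdef p_\sigma + p_{\sigma'}$ taken in $\Z/2\Z$. For every $i$ and every $a$ we compute $q(\rho_i(a)) = (p_\sigma(a)+1) + (p_{\sigma'}(a)+1) = q(a)$, so $q$ is invariant under each generator and hence constant on every orbit of $\langle \rho_1,\rho_2,\rho_3\rangle$. This is exactly where the transitivity hypothesis enters: it forces $q$ to be globally constant, so that $p_{\sigma'}$ equals either $p_\sigma$ or $p_\sigma + 1$. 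In both cases the unordered partition $\{p^{-1}(0),\, p^{-1}(1)\}$ is identical, and therefore $\sigma$ and $\sigma'$ induce the very same even--distance equivalence relation.

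The main obstacle here is conceptual rather than computational: one must recognise that the even--distance relation is nothing but a two--colouring constrained to be reversed by each $\rho_i$, and that transitivity pins such a colouring down up to a single global swap of the two colours. The parity bookkeeping in the second step (that line segments always have odd distance) is what makes this reversal property hold, while the harmless ambiguity of which class carries the even label is absorbed automatically, since the assertion concerns the equivalence relation --- that is, the unordered partition --- rather than the labelled colouring itself.
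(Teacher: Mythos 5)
Your proposal is correct and follows essentially the same route as the paper: both use Remark \ref{Rem_DistanceInCircleRepresentation} to see that each $\rho_i$ swaps the two even--distance classes, and then invoke transitivity to conclude that such a two--colouring is determined up to a global swap, hence the underlying partition is independent of the chosen cyclic order. Your version merely makes the paper's terse final sentence explicit by comparing the two parity functions via their difference $q$, which is a clean way to phrase the same argument.
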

\begin{proof}
    By Remark \ref{Rem_DistanceInCircleRepresentation}, 
    two numbers in an orbit of any $\rho_i$ have
    to lie in different equivalence classes. Since there are
    only two classes and $\langle \rho_1,\rho_2,\rho_3\rangle$ is
    transitive on $\bisn{2n}$,
    this determines the class membership of every number.
\end{proof}

A more pedestrian formulation would be, whether it is possible
to relabel $\bisn{2n}$ in such a way, that all involutions always
swap even and odd numbers.

Our next step is the geometric interpretation of this equivalence
relation. An orientation maps each face to a cyclic order of its
vertices such that the orders of adjacent faces are compatible. For
convenience, we only define this for simplicial surfaces with
vertex--3--colourings.
\begin{center}
    \begin{tikzpicture}[scale=2]
        \coordinate[label=below:2] (D) at (0,0);
        \coordinate[label=left:1] (L) at (150:1);
        \coordinate[label=above:3] (U) at (90:1);
        \coordinate[label=right:1] (R) at (30:1);

        \draw (U) -- (L) -- (D) -- (R) -- (U) -- (D);
        \node at (barycentric cs:D=1,L=1,U=1) {$\circlearrowleft$};
        \node at (barycentric cs:D=1,R=1,U=1) {$\circlearrowleft$};
    \end{tikzpicture}
\end{center}
\begin{defi}\label{Def_SimplicialOrientation}
    Let $(V,E,F,\prec)$ be a simplicial surface with vertex--3--colouring
    $c_V$. A \textbf{simplicial orientation} is a map
    $z: F \to \{ (1,2,3), (1,3,2) \}$, such that, for two faces $f_1$ and
    $f_2$ with a common edge, $z(f_1) = z(f_2)^{-1}$ holds.
\end{defi}

\begin{theo}\label{Theo_FoldingImpliesOrientation}
    Let $S$ be a simplicial surface that can be folded onto a 
    triangle. Then $S$ has a simplicial orientation.
\end{theo}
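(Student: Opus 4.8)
The plan is to extract from a folding a bipartition of the faces that precisely encodes the required orientation. First I would unwind what ``foldable'' gives us: by Remark \ref{Rem_TriangleFoldingByIntersectionFreedom} a triangle--folding is a total order $<$ on $F$ with respect to which all three colour partitions are intersection--free, and by Remark \ref{Rem_ConversionBetweenOrders} I may replace $<$ by a cyclic order $\sigma$ on $F$ that is intersection--free with respect to every colour partition. Writing $|F| = 2n$ (even since $S$ is closed), I fix a map $\iota$ placing the faces on the unit circle in the order given by $\sigma$, as in Definition \ref{Def_CircleRepresentation}. For each colour $i \in \{1,2,3\}$, the circle representation formed from $\iota$ together with the orbits of the colour involution $\rho_i$ is intersection--free by Lemma \ref{Lem_IntersectionFreedomOfCircleAndCycle}; note that the underlying map $\iota$, and hence the resulting notion of distance, is the same for all three colours.

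The key observation is that the even--distance equivalence relation of Definition \ref{Def_EvenOddDistance}, which by Remark \ref{Rem_EvenDistanceEquivalence} has exactly two classes, already supplies the orientation. By Remark \ref{Rem_DistanceInCircleRepresentation} every line segment of an intersection--free circle representation joins two points of odd distance; since the orbits of each $\rho_i$ are exactly such segments, each $\rho_i$ interchanges the two even--distance classes. Now every edge $e$ of $S$ carries some colour $c_E(e) = i$, and the two faces incident to $e$ form the corresponding orbit $\{f, \rho_i(f)\}$. Hence any two faces that share an edge lie in different even--distance classes.

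It remains to turn this bipartition into a simplicial orientation. I would define $z \colon F \to \{(1,2,3),(1,3,2)\}$ by sending one even--distance class to $(1,2,3)$ and the other to $(1,3,2)$. Since $(1,2,3)^{-1} = (1,3,2)$, the two admissible values are mutually inverse, so for faces $f_1, f_2$ sharing an edge --- which lie in different classes by the previous paragraph --- we obtain $z(f_1) \neq z(f_2)$ and therefore $z(f_1) = z(f_2)^{-1}$. This is exactly the condition in Definition \ref{Def_SimplicialOrientation}, so $z$ is a simplicial orientation.

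The main conceptual point, and the only place any real work hides, is the identification in the last paragraph: recognising that a simplicial orientation is nothing more than a proper two--colouring of the face--adjacency graph by the two mutually inverse cyclic orders. Once this is seen, the odd--distance property of line segments (Remark \ref{Rem_DistanceInCircleRepresentation}) does all the heavy lifting, forcing adjacent faces into opposite classes. No connectedness hypothesis is needed, since the single circle arranges all $2n$ faces simultaneously and the even--distance classes are defined globally; if one prefers an intrinsic description of these classes, Lemma \ref{Lem_EvenDistanceClassByInvolutions} shows, under transitivity of $\langle\rho_1,\rho_2,\rho_3\rangle$ on each connected component, that they depend only on the involutions and not on the chosen $\sigma$.
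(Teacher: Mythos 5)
Your proposal is correct and follows essentially the same route as the paper: both proofs use the even--distance equivalence classes on the circle representation (Remark \ref{Rem_EvenDistanceEquivalence}), invoke Remark \ref{Rem_DistanceInCircleRepresentation} to show that faces sharing an edge land in different classes, and then send the two classes to $(1,2,3)$ and $(1,3,2)$ respectively. Your write--up simply fills in the details that the paper's very terse proof leaves implicit.
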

\begin{proof}
    By Remark \ref{Rem_TriangleCoveringClassification}, 
    $S$ has a vertex--3--colouring and an induced
    edge--3--colouring with involutions $\rho_1$, $\rho_2$,
    and $\rho_3$. We argue for each orbit of 
    $\langle\rho_1,\rho_2,\rho_3\rangle$ separately.

    By Remark \ref{Rem_EvenDistanceEquivalence}, 
    the faces fall in two equivalence classes. Mapping one of the
    classes to $(1,2,3)$ and the other one to $(1,3,2)$ defines
    a simplicial orientation.
\end{proof}

\section{Acknowledgements}
I am grateful for the support of my supervisor, Alice Niemeyer, in 
writing this paper. I am also grateful for the funding by 
the ``Graduiertenkolleg Experimentelle konstruktive Algebra'' 
during the writing of this paper.

\cleardoublepage
\addcontentsline{toc}{section}{References}
\newcommand{\etalchar}[1]{$^{#1}$}
\providecommand{\bysame}{\leavevmode\hbox to3em{\hrulefill}\thinspace}
\providecommand{\MR}{\relax\ifhmode\unskip\space\fi MR }
\providecommand{\MRhref}[2]{%
  \href{http://www.ams.org/mathscinet-getitem?mr=#1}{#2}
}
\providecommand{\href}[2]{#2}


\newcommand{\etalchar}[1]{$^{#1}$}
\providecommand{\bysame}{\leavevmode\hbox to3em{\hrulefill}\thinspace}
\providecommand{\MR}{\relax\ifhmode\unskip\space\fi MR }
\providecommand{\MRhref}[2]{%
  \href{http://www.ams.org/mathscinet-getitem?mr=#1}{#2}
}
\providecommand{\href}[2]{#2}
\begin{thebibliography}{SMH{\etalchar{+}}16}

\bibitem[ABD{\etalchar{+}}01]{Arkin2001}
Esther~M. Arkin, Michael~A. Bender, Erik~D. Demaine, Martin~L. Demaine, Joseph
  S.~B. Mitchell, Saurabh Sethia, and Steven~S. Skiena, \emph{When can you fold
  a map?}, Algorithms and Data Structures: 7th International Workshop, WADS
  2001 Providence, RI, USA, August 8--10, 2001 Proceedings (Berlin, Heidelberg)
  (Frank Dehne, J{\"o}rg-R{\"u}diger Sack, and Roberto Tamassia, eds.), 2001,
  pp.~401--413.

\bibitem[BH96]{BernHayes_ComplexityOfFlatOrigami}
Marshall Bern and Barry Hayes, \emph{The complexity of flat origami},
  Proceedings of the Seventh Annual ACM-SIAM Symposium on Discrete Algorithms
  (Philadelphia, PA, USA), SODA '96, Society for Industrial and Applied
  Mathematics, 1996, pp.~175--183.

\bibitem[BNPS17]{PSNBsimplicialMMM}
Karl-Heinz Brakhage, Alice~C. Niemeyer, Wilhelm Plesken, and Ansgar Strzelczyk,
  \emph{Simplicial surfaces controlled by one triangle}, 2017. \MR{3747985}

\bibitem[BW08]{BuriWeinand_OrigamiFoldedPlateStructures}
Hani Buri and Yves Weinand, \emph{Origami - folded plate structures,
  architecture}, 10th WCTE (2008), 1--8.

\bibitem[Deh16]{Dehn_StarrheitKonvexerPolyeder}
Max Dehn, \emph{Über die starrheit konvexer polyeder}, Mathematische Annalen
  \textbf{77} (1916), 466--473 (Deutsch).

\bibitem[DLRS10]{DeLoeraRambauSantosTriangulations}
Jesus~A. De~Loera, Jorg Rambau, and Francisco Santos, \emph{Triangulations:
  Structures for algorithms and applications}, 1st ed., Springer Publishing
  Company, Incorporated, 2010.

\bibitem[DO10]{DemRou}
Erik~D. Demaine and Joseph O'Rouke, \emph{{Geometric Folding Algorithms,
  Linkages, Origami, Polyhedra}}, Cambridge University Press, 32 Avenue of the
  Americas, New York, 2010.

\bibitem[GY03]{GrossYellenGraphs}
Jonathan~L. Gross and Jay Yellen, \emph{Handbook of graph theory}, Discrete
  Mathematics and its Applications, CRC Press, 2003.

\bibitem[Hul94]{Hull94onthe}
Thomas Hull, \emph{On the mathematics of flat origamis}, Congressus Numerantium
  \textbf{100} (1994), 215--224.

\bibitem[IKN17]{IzmestievKleeNovik_SimplicialMovesBalancedComplexes}
Ivan Izmestiev, Steven Klee, and Isabella Novik, \emph{Simplicial moves on
  balanced complexes}, Advances in Mathematics \textbf{320} (2017), 82 -- 114.

\bibitem[Izm14]{Izmestiev_ClassificationKokotsakisPolyhedra}
Ivan Izmestiev, \emph{Classification of flexible kokotsakis polyhedra with
  quadrangular base}, 2014.

\bibitem[JL19]{Lang_OrigamiGeometricConstructions}
Robert J.~Lang, \emph{Origami and geometric constructions}, published on
  \url{http://www.langorigami.com}, 03 2019.

\bibitem[LO09]{Lang_Origami4}
R.J. Lang and OrigamiUSA., \emph{Origami 4}, Origami (AK Peters), Taylor \&
  Francis, 2009.

\bibitem[Pol]{Polthier2002}
Konrad Polthier, \emph{Polyhedral surfaces of constant mean curvature},
  Habilitationsschrift, TU-Berlin (Febr. 2002), 1-212.

\bibitem[SG31]{SauerGraf_FlaechenverbiegungFacettenflaeche}
R.~Sauer and H.~Graf, \emph{{\"U}ber fl{\"a}chenverbiegung in analogie zur
  verknickung offener facettenflache}, Mathematische Annalen \textbf{105}
  (1931), no.~1, 499--535.

\bibitem[SMH{\etalchar{+}}16]{Shigemune_OrigamiRobotPrinting}
H.~{Shigemune}, S.~{Maeda}, Y.~{Hara}, N.~{Hosoya}, and S.~{Hashimoto},
  \emph{Origami robot: A self-folding paper robot with an electrothermal
  actuator created by printing}, IEEE/ASME Transactions on Mechatronics
  \textbf{21} (2016), no.~6, 2746--2754.

\bibitem[Sta10]{Stachel_KinematicKokotsakis}
Hellmuth Stachel, \emph{A kinematic approach to kokotsakis meshes}, Computer
  Aided Geometric Design \textbf{27} (2010), no.~6, 428 -- 437.

\bibitem[Tac09]{Tachi_GeneralizationMeshOrigami}
Tomohiro Tachi, \emph{Generalization of rigid foldable quadrilateral mesh
  origami}, Proc. Int. Assoc. Shell and Spatial Struct.(Valencia, Spain), 2009,
  pp.~2287--2294.

\bibitem[WY10]{WuYou_OrigamiQuaternions}
Weina Wu and Zhong You, \emph{Modelling rigid origami with quaternions and dual
  quaternions}, Proceedings of the Royal Society A: Mathematical, Physical and
  Engineering Sciences \textbf{466} (2010), no.~2119, 2155--2174.

\end{thebibliography}

\end{document}
